\begin{document}
\newtheorem{theorem}{Theorem}
\newtheorem*{thmA}{Theorem A}
\newtheorem*{thmB}{Theorem B}
\newtheorem*{thmC}{Theorem C}
\newtheorem*{thmD}{Theorem D}
\newtheorem{proposition}[theorem]{Proposition}
\newtheorem{conjecture}[theorem]{Conjecture}
\def\theconjecture{\unskip}
\newtheorem{corollary}[theorem]{Corollary}
\newtheorem{lemma}[theorem]{Lemma}
\newtheorem{sublemma}[theorem]{Sublemma}
\newtheorem{observation}[theorem]{Observation}
\theoremstyle{definition}
\newtheorem{definition}{Definition}
\newtheorem{notation}[definition]{Notation}
\newtheorem{remark}[definition]{Remark}
\newtheorem{question}[definition]{Question}
\newtheorem{questions}[definition]{Questions}
\newtheorem{example}[definition]{Example}
\newtheorem{problem}[definition]{Problem}
\newtheorem{exercise}[definition]{Exercise}

\numberwithin{theorem}{section} \numberwithin{definition}{section}
\numberwithin{equation}{section}

\def\earrow{{\mathbf e}}
\def\rarrow{{\mathbf r}}
\def\uarrow{{\mathbf u}}
\def\varrow{{\mathbf V}}
\def\tpar{T_{\rm par}}
\def\apar{A_{\rm par}}

\def\reals{{\mathbb R}}
\def\torus{{\mathbb T}}
\def\heis{{\mathbb H}}
\def\integers{{\mathbb Z}}
\def\naturals{{\mathbb N}}
\def\complex{{\mathbb C}\/}
\def\distance{\operatorname{distance}\,}
\def\support{\operatorname{support}\,}
\def\dist{\operatorname{dist}\,}
\def\Span{\operatorname{span}\,}
\def\degree{\operatorname{degree}\,}
\def\kernel{\operatorname{kernel}\,}
\def\dim{\operatorname{dim}\,}
\def\codim{\operatorname{codim}}
\def\trace{\operatorname{trace\,}}
\def\Span{\operatorname{span}\,}
\def\dimension{\operatorname{dimension}\,}
\def\codimension{\operatorname{codimension}\,}
\def\nullspace{\scriptk}
\def\kernel{\operatorname{Ker}}
\def\ZZ{ {\mathbb Z} }
\def\p{\partial}
\def\rp{{ ^{-1} }}
\def\Re{\operatorname{Re\,} }
\def\Im{\operatorname{Im\,} }
\def\ov{\overline}
\def\eps{\varepsilon}
\def\lt{L^2}
\def\diver{\operatorname{div}}
\def\curl{\operatorname{curl}}
\def\etta{\eta}
\newcommand{\norm}[1]{ \|  #1 \|}
\def\expect{\mathbb E}
\def\bull{$\bullet$\ }

\def\xone{x_1}
\def\xtwo{x_2}
\def\xq{x_2+x_1^2}
\newcommand{\abr}[1]{ \langle  #1 \rangle}

\newcommand{\Norm}[1]{ \left\|  #1 \right\| }
\newcommand{\set}[1]{ \left\{ #1 \right\} }
\def\one{\mathbf 1}
\def\whole{\mathbf V}
\newcommand{\modulo}[2]{[#1]_{#2}}
\def \essinf{\mathop{\rm essinf}}
\def\scriptf{{\mathcal F}}
\def\scriptg{{\mathcal G}}
\def\scriptm{{\mathcal M}}
\def\scriptb{{\mathcal B}}
\def\scriptc{{\mathcal C}}
\def\scriptt{{\mathcal T}}
\def\scripti{{\mathcal I}}
\def\scripte{{\mathcal E}}
\def\scriptv{{\mathcal V}}
\def\scriptw{{\mathcal W}}
\def\scriptu{{\mathcal U}}
\def\scriptS{{\mathcal S}}
\def\scripta{{\mathcal A}}
\def\scriptr{{\mathcal R}}
\def\scripto{{\mathcal O}}
\def\scripth{{\mathcal H}}
\def\scriptd{{\mathcal D}}
\def\scriptl{{\mathcal L}}
\def\scriptn{{\mathcal N}}
\def\scriptp{{\mathcal P}}
\def\scriptk{{\mathcal K}}
\def\frakv{{\mathfrak V}}
\def\C{\mathbb{C}}
\def\R{\mathbb{R}}
\def\Rn{{\mathbb{R}^n}}
\def\Sn{{{S}^{n-1}}}
\def\M{\mathbb{M}}
\def\N{\mathbb{N}}
\def\Q{{\mathbb{Q}}}
\def\Z{\mathbb{Z}}
\def\F{\mathcal{F}}
\def\L{\mathcal{L}}
\def\S{\mathcal{S}}
\def\supp{\operatorname{supp}}
\def\dist{\operatorname{dist}}
\def\essi{\operatornamewithlimits{ess\,inf}}
\def\esss{\operatornamewithlimits{ess\,sup}}
\author{Zhengyang Li}
\address{Zhengyang Li \\
         School of Mathematical Sciences \\
         Beijing Normal University \\
         Laboratory of Mathematics and Complex Systems \\
         Ministry of Education \\
         Beijing 100875 \\
         People's Republic of China}
\email{zhengyli@mail.bnu.edu.cn}

\author{Qingying Xue}
\address{Qingying Xue\\
        School of Mathematical Sciences\\
        Beijing Normal University \\
        Laboratory of Mathematics and Complex Systems\\
        Ministry of Education\\
        Beijing 100875\\
        People's Republic of China}
\email{qyxue@bnu.edu.cn}

\thanks{The second author was supported partly by NSFC
(No. 11471041), the Fundamental Research Funds for the Central Universities (NO. 2014KJJCA10) and NCET-13-0065. \\ \indent Corresponding
author: Qingying Xue\indent Email: qyxue@bnu.edu.cn}

\keywords{Bi-parameter square function; Bi-parameter $g_{\lambda}^*$-function; Product $H^1$ space; Rectangle atomic decomposition.}

\date{May 2, 2016.}
\title[Bi-parameter Littlewood-Paley operators on product Hardy spaces]{\textbf{Boundedness of Bi-parameter} Littlewood-Paley operators on product Hardy space}
\maketitle

\begin{abstract}
Let $n_1,n_2\ge 1, \lambda_1>1$ and $\lambda_2>1$. For any $x=(x_1,x_2) \in \R^n\times\R^m$, let $g$ and $g_{\vec{\lambda}}^*$ be the bi-parameter Littlewood-Paley square functions defined by
\begin{align*}
g(f)(x)= \Big(\int_0^{\infty}\int_0^{\infty}|\theta_{t_1,t_2} f(x_1,x_2)|^2 \frac{dt_1}{t_1} \frac{dt_2}{t_2} \Big)^{1/2}, \hbox{\ \ and}
\end{align*}
$$
g_{\vec{\lambda}}^*(f)(x)
= \Big(\iint_{\R^{m+1}_{+}} \iint_{\R^{n+1}_{+}} \prod_{i=1}^2\Big(\frac{t_1}{t_i + |x_i - y_i|}\Big)^{n_i \lambda_i}
|\theta_{t_1,t_2} f(y_1,y_2)|^2 \frac{dy_1 dt_1}{t_1^{n+1}} \frac{dy_2 dt_2}{t_2^{m+1}} \Big)^{1/2},
$$
\noindent
where
$\theta_{t_1,t_2} f(x_1, x_2) = \iint_{\R^n\times\R^m} s_{t_1,t_2}(x_1,x_2,y_1,y_2)f(y_1,y_2) dy_1dy_2$. It is known that
the $L^2$ boundedness of bi-parameter $g$ and $g_{\vec{\lambda}}^*$ have been established recently by Martikainen, and Cao, Xue, respectively. In this paper, under certain structure conditions assumed on the kernel $s_{t_1,t_2},$ we show that both $g$ and $g_{\vec{\lambda}}^*$ are bounded from product Hardy space $H^1(\R^n\times\R^m)$ to $L^1(\R^n\times\R^m)$. As consequences, the $L^p$ boundedness of $g$ and $g_{\vec{\lambda}}^*$ will be obtained for $1<p<2$.\end{abstract}


\section{Introduction}

\subsection{Bi-parameter Littlewood-Paley operators}

The study of multi-parameter operators originated in the famous works of Fefferman and Stein \cite{F-S} on bi-parameter singular integral operators. Later on, Journ\'{e} \cite{Journe} peresented a multi-parameter version of $T1$ theorem on product spaces. Subsequently, Pott and Villarroya \cite{PV} formulated a new type of $T1$ theorem. Recently, a bi-parameter representation of singular integrals in expression of the dyadic shifts was given by Martikainen \cite{M2012}, which extended the famous result of Hyt\"{o}nen \cite{H} for one-parameter case. Moreover, a bi-parameter version of $T1$ theorem in spaces of non-homogeneous type was demonstrated by Hyt\"{o}nen and Martikainen \cite{HM}.

Still more recently, Martikainen \cite{M2014} studied a class of bi-parameter square function defined as follows:

\begin{definition}\label{definition 1.1}$($\cite{M2014}$)$
For any $x=(x_1,x_2) \in \R^n\times\R^m$, the bi-parameter Littlewood-Paley square function $g$ is defined by
\begin{align*}
g(f)(x):= \bigg(\int_0^{\infty}\int_0^{\infty}|\theta_{t_1,t_2} f(x_1,x_2)|^2 \frac{dt_1}{t_1} \frac{dt_2}{t_2} \bigg)^{1/2},
\end{align*}
where
$\theta_{t_1,t_2} f(x_1,x_2) = \iint_{\R^n\times\R^m} s_{t_1,t_2}(x_1,x_2,y_1,y_2)f(y_1,y_2) dy_1 \ dy_2.$
\end{definition}
In \cite{M2014}, the kernels $s_{t_1,t_2}$ were assumed to satisfy a natural size estimate, a H\"{o}lder estimates and two symmetric mixed H\"{o}lder and size estimates, they were also assumed to satisfy certain mixed Carleson and size estimates, mixed Carleson and H\"{o}lder estimates and a bi-parameter Carleson condition. Under these assumptions, the author of \cite{M2014} established the $L^2$ boundedness of $g$.

 Later on, by modifying and adding new ingredients in the assumptions, Cao and Xue \cite{CX} introduced and studied the $L^2$ boundedness of the following bi-parameter Littlewood-Paley $g_\lambda^*$-function:
\begin{definition}\label{definition 1.1}$($\cite{CX}$)$
Let $\lambda_1,\lambda_2 >1$, for any $x=(x_1,x_2) \in \R^{n}\times\R^{m}$, the bi-parameter Littlewood-Paley $g_\lambda^*$-function is defined by
\begin{align*}
g_{\vec{\lambda}}^*(f)(x)
&:= \bigg(\iint_{\R^{m+1}_{+}} \Big(\frac{t_2}{t_2 + |x_2 - y_2|}\Big)^{m \lambda_2}
\iint_{\R^{n+1}_{+}} \Big(\frac{t_1}{t_1 + |x_1 - y_1|}\Big)^{n \lambda_1} \\
&\quad\quad\quad \times |\vartheta_{t_1,t_2} f(y_1,y_2)|^2 \frac{dy_1 dt_1}{t_1^{n+1}} \frac{dy_2 dt_2}{t_2^{m+1}} \bigg)^{1/2},
\end{align*}
where
$\vartheta_{t_1,t_2} f(y_1,y_2) = \iint_{\R^{n}\times\R^{m}} K_{t_1,t_2}(y_1,y_2,z_1,z_2)f(z_1,z_2) dz_1 \ dz_2.$
\end{definition}

Till now, the only known result about $g$-function and $g_{\vec{\lambda}}^*$-functionis is the $L^2$ boundedness. Therefore, it leaves several questions open, such as, it is quite natural to ask if the $L^p$ boundedness and $H^1$ to $L^1$ boundedness are true or not. In this paper, our objects
of investigation are the bi-parameter Littlewood-Paley $g$-function and $g_{\vec{\lambda}}^*$-function.  We are mainly concerned with the boundedness of these two operators on product Hardy spaces (as consequences, the $L^p$ boundedness will be obtained for $1<p<2$). The introduction of product Hardy spaces will be given in the next subsection.
\subsection{Product Hardy space}
The product Hardy space was first introduced and studied by Malliavin and Malliavin \cite{MM} in 1977, and systematically studied by Gundy and Stein \cite{GS} in 1979. Later on, Chang and R. Fefferman \cite{CF2} established the atomic decomposition of $H^p(\R^2_+\times\R^2_+)$, which is more complicated than the classical $H^p(\R^n)$. By using the rectangle atomic decomposition of $H^p(\R^n\times\R^m)$ (Theorem A below) and a geometric covering lemma due to Journ\'{e}\cite{Jour}, R. Fefferman \cite{Fe} established the $(H^p(\R^n\times\R^m), L^p(\R^n\times\R^m))$ boundedness of Journ\'{e}'s product singular integrals. For more work about the boundedness of operators on product Hardy space, one may refer to \cite{Fe1}, \cite{HLLL}, \cite{HY}, \cite{YZ}.\par
In order to state some known results, we need to introduce two more definitions.
\begin{definition}[\textbf{$H^p(\R^n\times\R^m)$ atom}, \cite{CF}]\label{definition 1.2}
Let $a(x_1,x_2)$ be a function supported in an open set $\Omega\subset\R^n\times\R^m$ with finite measure. $a(x_1,x_2)$ is said to be an $H^p(\R^n\times\R^m)$ atom if it satisfies the following condition: \begin{enumerate}
\item[(i)]
 $\norm{a}_2\leq|\Omega|^{1/2-1/p}$.\\

\item[(ii)] $a$ can further be decomposed as $a(x_1, x_2)=\sum_{R\in \mathcal{M}(\Omega)}a_{R}(x_1,x_2)$, where $a_R$ are supported on the double of $R=I\times J$ ($I$ a dyadic cube in $\R^n$, $J$ a dyadic cube in $\R^m$) and $\mathcal{M}(\Omega)$ is the collection of all maximal dyadic rectangles contained in $\Omega$. Moreover,
$
\big\{\sum_{R\in \mathcal{M}(\Omega)}\norm{a_R}_2^2\big\}^{1/2}\leq|\Omega|^{1/2-1/p}.
$
\item[(iii)] $\int_{2I}a_R(x_1,x_2)x_1^{\alpha}dx_1=0$ for all $x_2\in \R^m$, $0\leq|\alpha|\leq N_{p,n}$,\par $\int_{2J}a_R(x_1,x_2)x_2^{\alpha}dx_2=0$ for all $x_1\in \R^n$, $0\leq|\beta|\leq N_{p,m}$,\par
 where $N_{p,n}$ and $N_{p,m}$ is a large integer depending on $p$ and $n$, \end{enumerate}\par
\end{definition}

\begin{definition}[\textbf{$H^p(\R^n\times\R^m)$ rectangle atom}, \cite{CF}]\label{definition 1.3}Let function $a(x_1,x_2)$ be supported on a rectangle $R=I\times J$, where $I$ is a cube in $\R^n$ and $J$ is a cube in $\R^m$, respectively, $a(x_1,x_2)$ is called an $H^p(\R^n\times\R^m)$ rectangle atom provided\par\begin{enumerate}
\item[(i)]$\norm{a}_2\leq|R|^{1/2-1/p}$,\par
\item[(ii)]$\int_Ia(x_1,x_2)x_1^{\alpha}dx_1=0$ for all $x_2\in\R^m$, $0\leq|\alpha|\leq N_{n,p}$,\par
\item[(iii)]$\int_Ja(x_1,x_2)x_2^{\beta}dx_2=0$ for all $x_1\in\R^n$, $0\leq|\beta|\leq N_{m,p}$.\end{enumerate}
\end{definition}
Chang and R. Fefferman \cite{CF} gave the following atomic decomposition of $H^p(\R^n\times\R^m)$.
\begin{thmA}$($\cite{CF}$)$
A distribution $f\in H^p(\R^n\times\R^m)$ if and only if $f=\sum_j\lambda_ja_j$, where $a_j$ are $H^p(\R^n\times\R^m)$ atoms, $\sum_j|\lambda_j|^p<\infty$, and the series converges in the distribution sense. Moreover, $\norm{f}^p_{H^p}$ is equivalent to $\inf\{\text{$\sum_j|\lambda_j|^p$: for all $f=\sum_j\lambda_ja_j$}\}$.
\end{thmA}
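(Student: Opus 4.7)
The plan is to prove both directions of the equivalence by combining the bi-parameter area-function characterization of $H^p$ with a Calder\'on reproducing formula, following the strategy of Chang--Fefferman.

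\emph{Necessity.} Starting from the equivalence $\|f\|_{H^p}\approx\|S(f)\|_{L^p}$, where $S$ is the bi-parameter area function built from $\psi\otimes\varphi$ having sufficiently many vanishing moments and satisfying the Calder\'on identity $\int_0^\infty\!\!\int_0^\infty(\psi_{t_1}\otimes\varphi_{t_2})\ast(\psi_{t_1}\otimes\varphi_{t_2})\ast f\,\frac{dt_1\,dt_2}{t_1 t_2}=f$, I would define the level sets $\Omega_k=\{S(f)>2^k\}$ and their strong-maximal enlargements $\tilde\Omega_k=\{M_s\mathbf{1}_{\Omega_k}>1/2\}$. The $L^2$ boundedness of the strong maximal function $M_s$ gives $|\tilde\Omega_k|\lesssim|\Omega_k|$, so that $\sum_k 2^{kp}|\tilde\Omega_k|\lesssim\|S(f)\|_{L^p}^p$. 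Partition the parameter domain $\R^{n+1}_+\times\R^{m+1}_+$ into the tent slabs $T(\tilde\Omega_k)\setminus T(\tilde\Omega_{k+1})$, where $T(U)=\{(y,t):B(y_1,t_1)\times B(y_2,t_2)\subset U\}$, and split $f=\sum_k b_k$ accordingly. Then $\supp b_k\subset\tilde\Omega_k$ by support considerations for the reproducing kernel; setting $\lambda_k=C\cdot 2^k|\tilde\Omega_k|^{1/p}$ and $a_k=\lambda_k^{-1}b_k$, the estimate $\|a_k\|_2\leq|\tilde\Omega_k|^{1/2-1/p}$ follows from the tent-space/Plancherel identity applied to $|(\psi_{t_1}\otimes\varphi_{t_2})\ast f|^2$ on the slab.

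\emph{Rectangle decomposition.} To obtain the refined structure of Definition 1.2, further split each tent slab along Whitney pieces indexed by maximal dyadic rectangles $R\in\mathcal{M}(\tilde\Omega_k)$, producing $b_k=\sum_R b_{k,R}$. Since $(\psi_{t_1}\otimes\varphi_{t_2})(x-y)$ is spatially supported in $B(y_1,t_1)\times B(y_2,t_2)$, the Whitney restriction forces $\supp b_{k,R}\subset 2R$; choosing $\psi$ and $\varphi$ with at least $N_{p,n},N_{p,m}$ vanishing moments yields condition (iii); and Plancherel applied rectangle-by-rectangle gives $\big(\sum_R\|b_{k,R}\|_2^2\big)^{1/2}\lesssim 2^k|\tilde\Omega_k|^{1/2}$, which is condition (ii).

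\emph{Sufficiency.} Conversely, given $f=\sum_j\lambda_j a_j$ with $\sum_j|\lambda_j|^p<\infty$, reduce via $p$-subadditivity of $\|\cdot\|_{H^p}^p$ to proving the uniform bound $\|S(a)\|_{L^p}\lesssim 1$ for a single atom $a$ supported in an open set $\Omega$. Decompose $\R^n\times\R^m=C\tilde\Omega\cup(C\tilde\Omega)^c$ for a suitable constant $C$. On $C\tilde\Omega$, H\"older combined with the $L^2$-boundedness of $S$ yields $\|S(a)\|_{L^p(C\tilde\Omega)}\lesssim\|a\|_2|\Omega|^{1/p-1/2}\leq 1$. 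On $(C\tilde\Omega)^c$, use the rectangle decomposition $a=\sum_{R\in\mathcal{M}(\Omega)}a_R$, exploit the vanishing moments of each $a_R$ to extract off-diagonal decay of $S(a_R)$ away from $2R$, and apply Journ\'e's covering lemma to sum the contributions.

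\emph{Main obstacle.} The decisive and genuinely multi-parameter step is the off-diagonal bound on $(C\tilde\Omega)^c$. No single-rectangle cancellation suffices, because Carleson's example shows that the naive rectangle-atomic space is strictly larger than $H^p$; one must therefore exploit simultaneously the rectangle-wise vanishing moments, the Whitney structure of $\mathcal{M}(\Omega)$, and the geometric decay of $S$. Journ\'e's lemma provides the quantitative device that converts the one-directional cancellation of each $a_R$ into a summable series after geometric expansion of rectangles. Navigating this balance between rectangle geometry and moment cancellation is where I expect the bulk of the work to lie.
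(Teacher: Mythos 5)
The paper does not prove Theorem A; it is quoted verbatim from Chang--Fefferman \cite{CF} and used as a black box, so there is no internal argument to compare your proposal against. On its own merits, your outline is essentially the standard Chang--Fefferman / tent-space strategy: the necessity direction via $\|f\|_{H^p}\approx\|S(f)\|_{L^p}$, level sets $\Omega_k=\{S(f)>2^k\}$, strong-maximal enlargements, the Calder\'on reproducing formula restricted to tent slabs, and a further Whitney split along $\mathcal{M}(\tilde\Omega_k)$ to manufacture the rectangle pieces $a_R$ with the right $\ell^2$ control and vanishing moments. This part is correct in outline and is exactly what makes the Chang--Fefferman atoms (as in Definition~1.2 here) the ``right'' objects.

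One caution on the sufficiency direction. You invoke Journ\'e's covering lemma to sum the far-field contributions of $S(a_R)$, and you motivate this by Carleson's counterexample. That motivation is somewhat misplaced for Theorem~A: Carleson's example and Journ\'e's lemma are what make Theorem~B (R.~Fefferman, 1987, for \emph{rectangle} atoms) nontrivial, whereas the sufficiency half of Theorem~A concerns Chang--Fefferman atoms, whose definition already carries the compensating geometric information --- namely the $\ell^2$ summability $\sum_{R\in\mathcal{M}(\Omega)}\|a_R\|_2^2\leq|\Omega|^{1-2/p}$ over maximal dyadic rectangles of a fixed open set. The original argument (Chang--Fefferman 1980/1982) predates Journ\'e's lemma (1985) and does not use it; the $L^2$ near-field estimate plus the one-directional cancellation of each $a_R$, combined with the fact that all the $R$'s live inside a single $\Omega$ and with the $\ell^2$ bound, suffices. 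In fact the genuinely delicate point is not to ``sum $\int_{(CR)^c}|S(a_R)|^p$ via Journ\'e'' but to exploit the cross-shaped geometry of the far-field region, where $S(a_R)$ decays in only one coordinate at a time, and to match that against the specific Whitney structure of $\mathcal{M}(\Omega)$. Using Journ\'e's lemma is a perfectly legitimate modern shortcut, but if you present it as \emph{the} decisive obstacle in Theorem~A you are conflating the proof of Theorem~A with that of Theorem~B. Worth keeping those two results --- and their proofs --- distinct, since the paper relies on both.
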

Since the support of $H^p(\R^n\times\R^m)$ atom is an open set, the decomposition of $H^p(\R^n\times\R^m)$ cannot be carried out to demonstrate the $H^p$ boundedness of linear or sublinear operators directly as the classical cases. However, it was quite surprising that R. Fefferman \cite{Fe} obtained the following nice result,

\begin{thmB}$($\cite{Fe}$)$
Suppose that $T$ is a bounded linear operator on $L^2(\R^n\times\R^m)$. Suppose further that if $a$ is an $H^p(\R^n\times\R^m)$ rectangle atom $(0<p\leq 1)$ supported on $R$, we have
\begin{align*}
\iint_{\R^n\times\R^m\setminus\widetilde{R}_{\gamma}}|T(a)(x_1,x_2)|^pdx_1dx_2\leq C\gamma^{-\delta},\quad \text {\ \ for all \ }\gamma\geq 2 \hbox{\ and \ some \ fixed \ }\delta>0,
\end{align*}
where $\widetilde{R}_{\gamma}$ denotes the $\gamma$ fold enlargement of $R$. Then $T$ is a bounded operator from $H^p(\R^n\times\R^m)$ to $L^p(\R^n\times\R^m)$.
\end{thmB}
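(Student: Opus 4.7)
The plan is to follow R. Fefferman's strategy from \cite{Fe}: combine the atomic decomposition of Theorem A with Journ\'{e}'s covering lemma \cite{Jour} to control a single $H^p$ atom by summing over its rectangle components. By Theorem A write $f=\sum_j \lambda_j a_j$ with $\sum_j |\lambda_j|^p<\infty$. Since $p\leq 1$, the quasi-triangle inequality
\begin{equation*}
\|Tf\|_p^p\leq \sum_j |\lambda_j|^p\, \|T a_j\|_p^p
\end{equation*}
reduces the problem to a uniform bound $\|Ta\|_p^p\leq C$ for every $H^p(\R^n\times\R^m)$ atom $a$ supported in an open set $\Omega$ of finite measure.

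For such an atom, I would fix an enlargement $\widetilde{\Omega}$ of $\Omega$ obtained by taking a fixed level set of the strong maximal function of $\chi_\Omega$; the strong maximal inequality yields $|\widetilde{\Omega}|\leq C|\Omega|$. Splitting $\|Ta\|_p^p$ as $\int_{\widetilde{\Omega}}|Ta|^p+\int_{(\widetilde{\Omega})^c}|Ta|^p$, the interior piece is routine: by H\"{o}lder's inequality and the assumed $L^2$-boundedness of $T$,
\begin{equation*}
\int_{\widetilde{\Omega}}|Ta|^p\leq |\widetilde{\Omega}|^{1-p/2}\|Ta\|_2^p\leq C|\Omega|^{1-p/2}\|a\|_2^p\leq C,
\end{equation*}
using property (i) of Definition \ref{definition 1.2}.

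The exterior piece is the heart of the argument. I would use (ii) of Definition \ref{definition 1.2} to decompose $a=\sum_{R\in \mathcal{M}(\Omega)}a_R$, with $a_R$ supported on $2R$ and inheriting the moment conditions (iii). Setting $\mu_R:=\|a_R\|_2\,|2R|^{1/p-1/2}$, each $b_R:=a_R/\mu_R$ is (up to harmless constants) an $H^p$ rectangle atom on $2R$. For each $R$ let $\gamma_R$ be the largest dilation with $\widetilde{R}_{\gamma_R}\subset \widetilde{\Omega}$; then $(\widetilde{\Omega})^c\subset (\widetilde{R}_{\gamma_R})^c$, and the hypothesis of the theorem applied to $b_R$ gives
\begin{equation*}
\int_{(\widetilde{\Omega})^c}|Ta|^p\leq \sum_R \mu_R^p\int_{(\widetilde{R}_{\gamma_R})^c}|Tb_R|^p\leq C\sum_R \mu_R^p\,\gamma_R^{-\delta}.
\end{equation*}

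The main obstacle is then closing the sum $\sum_R \mu_R^p \gamma_R^{-\delta}\leq C$. My plan is to apply H\"{o}lder's inequality with exponents $2/p$ and $2/(2-p)$ and invoke Journ\'{e}'s covering lemma, which supplies $\sum_{R\in \mathcal{M}(\Omega)}|R|\,\gamma_R^{-\eta}\leq C_\eta|\Omega|$ for every $\eta>0$; taking $\eta=2\delta/(2-p)$ gives
\begin{equation*}
\sum_R \mu_R^p\,\gamma_R^{-\delta}\leq \Big(\sum_R \|a_R\|_2^2\Big)^{p/2} \Big(\sum_R |2R|\,\gamma_R^{-\eta}\Big)^{(2-p)/2}\leq C\,|\Omega|^{p(1/2-1/p)}\,|\Omega|^{(2-p)/2}=C,
\end{equation*}
which closes the argument. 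The subtle point is the geometric calibration of the dilation parameters $\gamma_R$ so that both Journ\'{e}'s lemma and the hypothesis apply consistently; rectangles in $\mathcal{M}(\Omega)$ may have wildly different eccentricities, so the compatibility between the one-sided enlargement encoded in $\widetilde{R}_\gamma$ and the geometric enlargement used in Journ\'{e}'s lemma must be checked, but this is standard in the product Hardy space setting.
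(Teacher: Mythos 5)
The paper does not actually prove Theorem B; it quotes it from R.\ Fefferman's \cite{Fe}, and later remarks that the vector-valued variant (Lemma \ref{lemma}) ``follows from the same idea of R.\ Fefferman'' with the proof omitted. So there is no proof in the paper to compare against; I assess your proposal as a reconstruction of the argument in \cite{Fe}.

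Your outline is indeed Fefferman's strategy, and the bookkeeping is fine: the atomic decomposition plus $p$-quasi-triangle inequality, the interior estimate via H\"older and $L^2$-boundedness, the normalization $b_R=a_R/\mu_R$, and the H\"older step with exponents $2/p$ and $2/(2-p)$ are all correct. The gap is in the Journ\'e step. You define $\gamma_R$ as the largest \emph{concentric} dilation with $\widetilde{R}_{\gamma_R}\subset\widetilde\Omega$ and then assert Journ\'e's covering lemma supplies $\sum_{R\in\mathcal{M}(\Omega)}|R|\,\gamma_R^{-\eta}\lesssim|\Omega|$. But Journ\'e's lemma controls \emph{one-directional} stretch factors: for $R=I\times J$ it bounds $\sum|R|\,\gamma_1(R)^{-\eta}$ where $\gamma_1(R)$ measures how far $I$ can be enlarged with $J$ held fixed (and, symmetrically, $\gamma_2(R)$, obtained via the iterated enlargement of \cite{Fe}). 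Since the concentric dilation is the more demanding condition, $\gamma_R\le\min(\gamma_1(R),\gamma_2(R))$, hence $\gamma_R^{-\eta}\ge\max\bigl(\gamma_1(R)^{-\eta},\gamma_2(R)^{-\eta}\bigr)$; Journ\'e's estimate therefore gives a \emph{lower} bound on your sum $\sum|R|\,\gamma_R^{-\eta}$, not the upper bound you need, so the chain of inequalities does not close. Fefferman's actual proof enlarges $R$ iteratively to $\hat I\times J\subset\widetilde\Omega$ and then $\hat I\times\hat J\subset\widetilde{\widetilde\Omega}$, applies the rectangle-atom hypothesis with $\gamma\approx\min\bigl(\ell(\hat I)/\ell(I),\ell(\hat J)/\ell(J)\bigr)$, splits $\min^{-\delta}\le\gamma_1^{-\delta}+\gamma_2^{-\delta}$, and invokes the one-directional Journ\'e bound on each resulting sum. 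Your closing remark that ``the geometric calibration...must be checked'' correctly flags the issue, but it is mislabeled as routine: repairing it is precisely the content of \cite{Fe}'s argument, and as written the proposal does not contain the needed mechanism.
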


\subsection{Assumptions and Main result}
\ \

\vspace{0.3cm}
In order to state our main results, we first present the following assumptions:\\
\noindent\textbf{Assumption 1 (Standard estimates of $s_{t_1,t_2}$).} The kernel $s_{t_1,t_2}: (\R^n\times\R^m) \times (\R^n\times\R^m) \rightarrow \C$ is assumed to satisfy the following estimates:
\begin{enumerate}
\item [(1)] Size condition :
$$ |s_{t_1,t_2}(x,y)|
\lesssim \frac{t_1^{\alpha}}{(t_1 + |x_1 - y_1|)^{n+\alpha}} \frac{t_2^{\beta}}{(t_2 + |x_2 - y_2|)^{m+\beta}}.$$
\item [(2)] H\"{o}lder condition :
\begin{align*}
|s_{t_1,t_2}(x,y) - s_{t_1,t_2}(x,(y_1,y_2')) &- s_{t_1,t_2}(x,(y_1',y_2)) + s_{t_1,t_2}(x,y')| \\
& \lesssim \frac{|y_1 - y_1'|^{\alpha}}{(t_1 + |x_1 - y_1|)^{n+\alpha}} \frac{|y_2 - y_2'|^{\beta}}{(t_2 + |x_2 - y_2|)^{m+\beta}},
\end{align*}
whenever $|y_1 -y_1'| < t_1/2$ and $|y_2 - y_2'| < t_2/2$.
\item [(3)] Mixed H\"{o}lder and size conditions :
$$ |s_{t_1,t_2}(x,y) - s_{t_1,t_2}(x,(y_1,y_2'))|
\lesssim \frac{t_1^{\alpha}}{(t_1 + |x_1 - y_1|)^{n+\alpha}} \frac{|y_2 - y_2'|^{\beta}}{(t_2 + |x_2 - y_2|)^{m+\beta}},$$
whenever $|y_2 -y_2'| < t_2/2$ and
$$ |s_{t_1,t_2}(x,y) - s_{t_1,t_2}(x,(y_1',y_2))|
\lesssim \frac{|y_1 - y_1'|^{\alpha}}{(t_1 + |x_1 - y_1|)^{n+\alpha}} \frac{t_2^{\beta}}{(t_2 + |x_2 - y_2|)^{m+\beta}},$$
whenever $|y_1 - y_1'| < t_1/2$.
\end{enumerate}
\noindent\textbf{Assumption 2 (Carleson condition $\times$ Standard estimates of $s_{t_1,t_2}$).}
If $I \subset \Rn$ is a cube with side length $\ell(I)$, we define the associated Carleson box by $\widehat{I}=I \times (0,\ell(I))$. We assume the following conditions : For every cube $I \subset \Rn$ and $J \subset \R^m$, there holds that
\begin{enumerate}
\item [(1)] Combinations of Carleson and size conditions :
\begin{align*}
\Big( \iint_{\widehat{I}}  \Big| \int_{I} s_{t_1,t_2}(x_1,x_2,y_1,y_2) dy_1 \Big|^2 \frac{dx_1 dt_1}{t_1} \Big)^{1/2} \lesssim |I|^{1/2}\frac{ t_2^{\beta}}{(t_2 + |x_2 - y_2|)^{m + \beta}}
\end{align*}
and
\begin{align*}
\Big( \iint_{\widehat{J}}  \Big| \int_{J} s_{t_1,t_2}(x_1,x_2,y_1,y_2) dy_2\Big|^2 \frac{dx_2 dt_2}{t_2} \Big)^{1/2} \lesssim\frac{ t_1^{\alpha}}{(t_1 + |x_1 - y_1|)^{m + \alpha}}|J|^{1/2}.
\end{align*}
\item [(2)] Combinations of Carleson and H\"{o}lder conditions :
\begin{align*}
\Big( \iint_{\widehat{I}} \Big| \int_{I} [ s_{t_1,t_2}(x_1,x_2,y_1,y_2) - s_{t_1,t_2}(x_1,x_2,&y_1, y_2') ] dy_1 \Big|^2\frac{dx_1 dt_1}{t_1}\Big)^{1/2} \\
&\lesssim |I|^{1/2} \frac{|y_2 - y_2'|^{\beta}}{(t_2 + |x_2 - y_2|)^{m + \beta}},
\end{align*}
\noindent whenever $|y_2 - y_2'| < t_2/2$. And
\begin{align*}
\Big( \iint_{\widehat{J}} \Big| \int_{J} [ s_{t_1,t_2}(x_1,x_2,y_1,y_2) - s_{t_1,t_2}(x_1,x_2,&y_1', y_2) ] dy_2 \Big|^2\frac{dx_2 dt_2}{t_2} \Big)^{1/2} \\
&\lesssim |J|^{1/2} \frac{|y_1 - y_1'|^{\alpha}}{(t_1 + |x_1 - y_1|)^{m + \alpha}},
\end{align*}
whenever $|y_1 - y_1'| < t_1/2$.
\end{enumerate}
\noindent\textbf{Assumption 2$'$ (Strong Carleson condition $\times$ Standard estimates of $s_{t_1,t_2}$).}
If $I \subset \Rn$ is a cube with side length $\ell(I)$, we define the associated Carleson box by $\widehat{I}=I \times (0,\ell(I))$. We assume the following conditions : For every cube $I \subset \Rn$ and $J \subset \R^m$, there holds that
\begin{enumerate}
\item [(1)] Combinations of Carleson and size conditions :
\begin{align*}
\Big( \iint_{\widehat{I}}   \int_{I} |s_{t_1,t_2}(x_1,x_2,y_1,y_2)|^2 dy_1  \frac{dx_1 dt_1}{t_1} \Big)^{1/2} \lesssim \frac{ t_2^{\beta}}{(t_2 + |x_2 - y_2|)^{m + \beta}}
\end{align*}
and
\begin{align*}
\Big( \iint_{\widehat{J}}   \int_{J} |s_{t_1,t_2}(x_1,x_2,y_1,y_2)|^2 dy_2  \frac{dx_2 dt_2}{t_2} \Big)^{1/2} \lesssim\frac{ t_1^{\beta}}{(t_1 + |x_1 - y_1|)^{m + \alpha}}.
\end{align*}
\item [(2)] Combinations of Carleson and H\"{o}lder conditions :
\begin{align*}
\Big( \iint_{\widehat{I}} \int_{I} | s_{t_1,t_2}(x_1,x_2,y_1,y_2) - s_{t_1,t_2}(x_1,x_2,&y_1, y_2') |^2 dy_1 \frac{dx_1 dt_1}{t_1} \Big)^{1/2} \\
&\lesssim \frac{|y_2 - y_2'|^{\beta}}{(t_2 + |x_2 - y_2|)^{m + \beta}},
\end{align*}
\noindent whenever $|y_2 - y_2'| < t_2/2$. And
\begin{align*}
\Big( \iint_{\widehat{J}} \int_{J} | s_{t_1,t_2}(x_1,x_2,y_1,y_2) - s_{t_1,t_2}(x_1,x_2,&y_1', y_2) |^2 dy_2 \frac{dx_2 dt_2}{t_2} \Big)^{1/2} \\
&\lesssim \frac{|y_1 - y_1'|^{\alpha}}{(t_1 + |x_1 - y_1|)^{m + \alpha}},
\end{align*}
whenever $|y_1 - y_1'| < t_1/2$.
\end{enumerate}
\noindent\textbf{Assumption 3 (Bi-parameter Carleson condition of $s_{t_1,t_2}$).}
Let $\mathcal{D}=\mathcal{D}_n \times \mathcal{D}_m$, where $\mathcal{D}_n$ is a dyadic grid in $\Rn$ and $\mathcal{D}_m$ is a dyadic grid in $\R^m$. For $I \in \mathcal{D}_n$, let $W_I = I \times (\ell(I)/2, \ell(I))$ be the associated Whitney region. Denote
\begin{align*}
C_{I J}^{\mathcal{D}} &= \iint_{W_J} \iint_{W_I}|\theta_{t_1,t_2} \mathbf{1}(y_1, y_2)|^2
\frac{dx_1 dt_1}{t_1} \frac{dx_2 dt_2}{t_2} .
\end{align*}
We assume the following $bi$-$parameter \ Carleson \ condition$: For every $\mathcal{D}=\mathcal{D}_n \times \mathcal{D}_m$ there holds that
\begin{equation}\label{Car-condition}
\sum_{\substack{I \times J \in \mathcal{D} \\ I \times J \subset \Omega}} C_{I J}^{\mathcal{D}} \lesssim |\Omega|
\end{equation}
for all sets $\Omega \subset \R^n\times\R^m$ such that $|\Omega| < \infty$ and such that for every $x \in \Omega$ there exists $I \times J \in \mathcal{D}$ so that $x \in I \times J \subset \Omega$.\par
\vspace{0.3cm}
\noindent\textbf{Assumption 4 (Standard estimates of $K_{t_1,t_2}$).} The kernel $K_{t_1,t_2}: (\R^n\times\R^m) \times (\R^n\times\R^m) \rightarrow \C$ is assumed to satisfy the following estimates:
\begin{enumerate}
\item [(1)] Size condition :
$$ |K_{t_1,t_2}(x,y)|
\lesssim \frac{t_1^{\alpha}}{(t_1 + |x_1 - y_1|)^{n+\alpha}} \frac{t_2^{\beta}}{(t_2 + |x_2 - y_2|)^{m+\beta}}.$$
\item [(2)] H\"{o}lder condition :
\begin{align*}
|K_{t_1,t_2}(x,y) -& K_{t_1,t_2}(x,(y_1,y_2')) - K_{t_1,t_2}(x,(y_1',y_2)) + K_{t_1,t_2}(x,y')| \\
& \lesssim \frac{|y_1 - y_1'|^{\alpha}}{(t_1 + |x_1 - y_1|)^{n+\alpha}} \frac{|y_2 - y_2'|^{\beta}}{(t_2 + |x_2 - y_2|)^{m+\beta}},
\end{align*}
whenever $|y_1 -y_1'| < t_1/2$ and $|y_2 - y_2'| < t_2/2$.
\item [(3)] Mixed H\"{o}lder and size conditions :
$$ |K_{t_1,t_2}(x,y) - K_{t_1,t_2}(x,(y_1,y_2'))|
\lesssim \frac{t_1^{\alpha}}{(t_1 + |x_1 - y_1|)^{n+\alpha}} \frac{|y_2 - y_2'|^{\beta}}{(t_2 + |x_2 - y_2|)^{m+\beta}},$$
whenever $|y_2 -y_2'| < t_2/2$ and
$$ |K_{t_1,t_2}(x,y) - K_{t_1,t_2}(x,(y_1',y_2))|
\lesssim \frac{|y_1 - y_1'|^{\alpha}}{(t_1 + |x_1 - y_1|)^{n+\alpha}} \frac{t_2^{\beta}}{(t_2 + |x_2 - y_2|)^{m+\beta}},$$
whenever $|y_1 - y_1'| < t_1/2$.
\end{enumerate}
\noindent\textbf{Assumption 5 (Carleson condition $\times$ Standard estimates of $K_{t_1,t_2}$).}
If $I \subset \Rn$ is a cube with side length $\ell(I)$, we define the associated Carleson box by $\widehat{I}=I \times (0,\ell(I))$. We assume the following conditions : For every cube $I \subset \Rn$ and $J \subset \R^m$, there holds that
\begin{enumerate}
\item [(1)] Combinations of Carleson and size conditions :
\begin{align*}
\Big( \iint_{\widehat{I}} \int_{\Rn} \Big| \int_{I} K_{t_1,t_2}(x-y,z_1,z_2) dz_1 \Big|^2 \Big(\frac{t_1}{t_1 + |y_1|}\Big)^{n \lambda_1} &\frac{dy_1}{t_1^n} \frac{dx_1 dt_1}{t_1} \Big)^{1/2} \\
&\lesssim |I|^{1/2}\frac{ t_2^{\beta}}{(t_2 + |x_2 - y_2 - z_2|)^{m + \beta}}
\end{align*}
and
\begin{align*}
\Big( \iint_{\widehat{J}} \int_{\R^m} \Big| \int_{J} K_{t_1,t_2}(x-y,z_1,z_2) dz_2 \Big|^2 \Big(\frac{t_2}{t_2 + |y_2|}\Big)^{n \lambda_1} &\frac{dy_2}{t_2^n} \frac{dx_2 dt_2}{t_2} \Big)^{1/2} \\
&\lesssim |J|^{1/2} \frac{t_1^{\alpha}}{(t_1 + |x_1 - y_1 - z_1|)^{n + \alpha}}.
\end{align*}
\item [(2)] Combinations of Carleson and H\"{o}lder conditions :
\begin{align*}
\Big( \iint_{\widehat{I}} \int_{\Rn} \Big| \int_{I} [ K_{t_1,t_2}(x-y,z_1,z_2) - K_{t_1,t_2}(x-y,z_1,z_2')& ] dz_1 \Big|^2 \Big(\frac{t_1}{t_1 + |y_1|}\Big)^{n \lambda_1} \frac{dy_1}{t_1^n} \frac{dx_1 dt_1}{t_1} \Big)^{\frac12} \\
&\lesssim |I|^{1/2} \frac{|z_2 - z_2'|^{\beta}}{(t_2 + |x_2 - y_2 - z_2|)^{m + \beta}},
\end{align*}
\noindent whenever $|z_2 - z_2'| < t_2/2$. And
\begin{align*}
\Big( \iint_{\widehat{J}} \int_{\R^m} \Big| \int_{J} [ K_{t_1,t_2}(x-y,z_1,z_2) - K_{t_1,t_2}(x-y,z_1',z_2)& ] dz_2 \Big|^2 \Big(\frac{t_2}{t_2 + |y_2|}\Big)^{m \lambda_2} \frac{dy_2}{t_2^n} \frac{dx_2 dt_2}{t_2} \Big)^{\frac12} \\
&\lesssim |J|^{1/2} \frac{|z_1 - z_1'|^{\alpha}}{(t_1 + |x_1 - y_1 - z_1|)^{n + \alpha}},
\end{align*}
whenever $|z_1 - z_1'| < t_1/2$.
\end{enumerate}
\noindent\textbf{Assumption 5$'$ (Strong Carleson condition $\times$ Standard estimates of $K_{t_1,t_2}$).}
If $I \subset \Rn$ is a cube with side length $\ell(I)$, we define the associated Carleson box by $\widehat{I}=I \times (0,\ell(I))$. We assume the following conditions : For every cube $I \subset \Rn$ and $J \subset \R^m$, there holds that
\begin{enumerate}
\item [(1)] Combinations of Carleson and size conditions :
\begin{align*}
\Big( \iint_{\widehat{I}} \int_{\Rn} \int_{I}| K_{t_1,t_2}(x-y,z_1,z_2) |^2  \Big(\frac{t_1}{t_1 + |y_1|}\Big)^{n \lambda_1} &\frac{dz_1dy_1}{t_1^n} \frac{dx_1 dt_1}{t_1} \Big)^{1/2} \\
&\lesssim \frac{ t_2^{\beta}}{(t_2 + |x_2 - y_2 - z_2|)^{m + \beta}}
\end{align*}
and
\begin{align*}
\Big( \iint_{\widehat{J}} \int_{\R^m} \int_{J} |K_{t_1,t_2}(x-y,z_1,z_2)|^2  \Big(\frac{t_2}{t_2 + |y_2|}\Big)^{n \lambda_1} &\frac{dz_2dy_2}{t_2^n} \frac{dx_2 dt_2}{t_2} \Big)^{1/2} \\
&\lesssim  \frac{t_1^{\alpha}}{(t_1 + |x_1 - y_1 - z_1|)^{n + \alpha}}.
\end{align*}
\item [(2)] Combinations of Carleson and H\"{o}lder conditions :
\begin{align*}
\Big( \iint_{\widehat{I}} \int_{\Rn} \int_{I} | K_{t_1,t_2}(x-y,z_1,z_2) - K_{t_1,t_2}(x-y,z_1,&z_2') |^2 \Big(\frac{t_1}{t_1 + |y_1|}\Big)^{n \lambda_1} \frac{dz_1 dy_1}{t_1^n} \frac{dx_1 dt_1}{t_1} \Big)^{1/2} \\
&\lesssim \frac{|z_2 - z_2'|^{\beta}}{(t_2 + |x_2 - y_2 - z_2|)^{m + \beta}},
\end{align*}
\noindent whenever $|z_2 - z_2'| < t_2/2$. And
\begin{align*}
\Big( \iint_{\widehat{J}} \int_{\R^m} \int_{J} | K_{t_1,t_2}(x-y,z_1,z_2) - K_{t_1,t_2}(x-y,&z_1',z_2) |^2  \Big(\frac{t_2}{t_2 + |y_2|}\Big)^{m \lambda_2} \frac{dz_2dy_2}{t_2^n} \frac{dx_2 dt_2}{t_2} \Big)^{1/2} \\
&\lesssim \frac{|z_1 - z_1'|^{\alpha}}{(t_1 + |x_1 - y_1 - z_1|)^{n + \alpha}},
\end{align*}
whenever $|z_1 - z_1'| < t_1/2$.
\end{enumerate}
\noindent\textbf{Assumption 6 (Bi-parameter Carleson condition  of $K_{t_1,t_2}$).}
Let $\mathcal{D}=\mathcal{D}_n \times \mathcal{D}_m$, where $\mathcal{D}_n$ is a dyadic grid in $\Rn$ and $\mathcal{D}_m$ is a dyadic grid in $\R^m$. For $I \in \mathcal{D}_n$, let $W_I = I \times (\ell(I)/2, \ell(I))$ be the associated Whitney region. Denote $n_1=n,n_2=m$ and
\begin{align*}
C_{I J}^{\mathcal{D}} &= \iint_{W_J} \iint_{W_I} \iint_{\R^{n+m}}|\theta_{t_1,t_2} \mathbf{1}(y_1, y_2)|^2
\Big[\prod_{i=1}^2\Big(\frac{t_i}{t_i + |x_i - y_i|}\Big)^{n_i \lambda_i} \bigg] \frac{dy_1dy_2}{t_1^nt_2^m}
\frac{dx_1 dt_1}{t_1} \frac{dx_2 dt_2}{t_2} .
\end{align*}
We assume the following $bi$-$parameter \ Carleson \ condition$: For every $\mathcal{D}=\mathcal{D}_n \times \mathcal{D}_m$ there holds that
\begin{equation}\label{Car-condition}
\sum_{\substack{I \times J \in \mathcal{D} \\ I \times J \subset \Omega}} C_{I J}^{\mathcal{D}} \lesssim |\Omega|
\end{equation}
for all sets $\Omega \subset \R^{n+m}$ such that $|\Omega| < \infty$ and such that for every $x \in \Omega$ there exists $I \times J \in \mathcal{D}$ so that $x \in I \times J \subset \Omega$.

The $L^2(\R^n\times\R^m)$ boundedness of bi-parameter $g$ function is given by Martikainen \cite{M2014}, where the kernel $s_{t_1,t_2}$ satisfies the \textbf{Assumptions} \textbf{1}, \textbf{2}, \textbf{3}. Cao and Xue \cite{CX} establish the $L^2(\R^n\times\R^m)$ boundedness for bi-parameter $g_{\vec{\lambda}}^*$-function, where the kernel $K_{t_1,t_2}$ satisfies the \textbf{Assumptions} \textbf{4}, \textbf{5}, \textbf{6}. It deserves to note that the \textbf{Assumptions 3} is a necessary condition for the $L^2$ boundedness of bi-parameter $g$-function to be held, and \textbf{Assumptions 6} is also necessary for the $L^2$ boundedness of bi-parameter $g_{\vec{\lambda}}^*$-function.
\begin{remark} It is easy to see that Assumptions 2 is a little weaker than Assumptions 2$'$ and the same is true for Assumptions 5 and Assumption 5$'$.\end{remark}

Now we state the main result of this paper.
\begin{theorem}\label{thm1}
Assume that the kernel $s_{t_1,t_2}$ satisfies the \textbf{Assumptions} \textbf{1}, \textbf{2$'$}, \textbf{3}. Then, it holds that
\begin{align}\label{Re}
\big\|g(f) \big\|_{L^1(\R^n\times\R^m)} \lesssim \big\| f \big\|_{H^1(\R^n\times\R^m)},
\end{align}
where the implied constant depends only on the assumptions.
\end{theorem}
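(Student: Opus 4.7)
The plan is to use Theorem A (the Chang--R.~Fefferman atomic decomposition) together with Journ\'e's covering lemma to reduce the $H^1(\R^n\times\R^m)\to L^1(\R^n\times\R^m)$ boundedness of the sublinear operator $g$ to a single rectangle-atom decay estimate; Martikainen's $L^2$-bound, valid under Assumptions~1, 2, 3 (and hence under the strictly stronger Assumption 2$'$), will serve as the anchor for the ``near'' part. Writing $f=\sum_j\lambda_j a_j$ with $\sum_j|\lambda_j|\lesssim\|f\|_{H^1}$ and each $a_j$ an $H^1(\R^n\times\R^m)$-atom supported on an open set $\Omega_j$, it suffices to prove $\|g(a)\|_1\lesssim 1$ uniformly over atoms $a=\sum_{R\in\mathcal{M}(\Omega)}a_R$ supported on $\Omega$. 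With $\widetilde\Omega$ a suitable Journ\'e-type enlargement of $\Omega$, Cauchy--Schwarz combined with the $L^2$-bound immediately handles the near term:
\[
\int_{\widetilde\Omega}g(a)\,dx\leq|\widetilde\Omega|^{1/2}\,\|g(a)\|_2\lesssim|\Omega|^{1/2}\,\|a\|_2\leq 1.
\]

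For the far term, sublinearity of $g$ yields $\int_{\widetilde\Omega^{c}}g(a)\leq\sum_{R\in\mathcal{M}(\Omega)}\int_{(\widetilde R_{\gamma_R})^{c}}g(a_R)$, and the whole problem reduces to establishing a rectangle-atom estimate
\[
\int_{(\widetilde R_\gamma)^{c}}g(a_R)(x)\,dx\lesssim\gamma^{-\delta}\,\|a_R\|_2\,|R|^{1/2},\qquad\gamma\geq 2,
\]
for some fixed $\delta>0$ and every function $a_R$ supported in $2R$ with the bi-parameter cancellations of Definition~1.3. Combining this inequality with Journ\'e's covering lemma (which supplies the enlargement factors $\gamma_R\geq 2$) and the $\ell^2$-size condition $\bigl(\sum_R\|a_R\|_2^2\bigr)^{1/2}\leq|\Omega|^{-1/2}$ yields the required $O(1)$ bound for $\int_{\widetilde\Omega^{c}}g(a)$.

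The rectangle-atom estimate is proved by decomposing $(\widetilde R_\gamma)^{c}$ into the doubly-far region $\{x_1\notin\gamma I,\ x_2\notin\gamma J\}$ and the two symmetric singly-far regions. In the doubly-far region, the full bi-parameter H\"older condition (Assumption~1(2)) combined with the two cancellations of $a_R$, applied through Taylor-type expansions of $s_{t_1,t_2}(x,\cdot)$ at the centers of $2I$ and $2J$, gives rapid polynomial decay in both $|x_1-c_I|/\ell(I)$ and $|x_2-c_J|/\ell(J)$, easily integrating to $\gamma^{-\delta}\|a_R\|_2|R|^{1/2}$. In a singly-far region, say $x_1\notin\gamma I$ and $x_2\in\gamma J$, the subregime $t_2\geq\ell(J)$ is settled by the mixed H\"older-size estimate (Assumption~1(3)) using cancellation in $y_1$.

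The main obstacle is the subregime $t_2<\ell(J)$, where no distance is available in the second variable and pointwise kernel bounds alone do not suffice; this is precisely where Assumption 2$'$ is decisive. The strategy is first to apply Cauchy--Schwarz in $x_2\in\gamma J$, upgrading the $L^1$-norm there to $|\gamma J|^{1/2}$ times the $L^2$-norm of $\theta_{t_1,t_2}a_R(x_1,\cdot)$ over $\gamma J\times(0,\ell(J))$ in $(x_2,t_2)$. One then exploits the cancellation of $a_R$ in $y_1$ (producing a kernel difference controlled by Assumption~1(3) with the exponent $\alpha$), and pushes the $L^2(\gamma J\times(0,\ell(J)))$ norm inside the $y$-integral by Minkowski. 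The resulting mixed kernel norm is converted into the required $\gamma^{-\delta}$ decay by the strong Carleson--H\"older estimate Assumption~2$'$(2). The ordinary Carleson Assumption~2 is insufficient at this step, since once Minkowski has been applied against a genuine atom $a_R$ (and not the indicator $\mathbf{1}_I$), one needs a pointwise-in-$y_1$ control on the $L^2(\widehat J)$-norm of $s_{t_1,t_2}$ rather than an average of $\int_J s_{t_1,t_2}\,dy_2$, which is exactly what the strengthening in Assumption~2$'$ provides; Assumption~3 enters only implicitly through the $L^2$-bound invoked for the near part.
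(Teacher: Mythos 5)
Your overall strategy is the same as the paper's: atomic decomposition (Theorem A) plus the Fefferman/Journ\'e machinery of Theorem~B reduce everything to the rectangle--atom decay estimate, which is then proved by splitting the far region into a doubly--far piece handled by the pointwise kernel conditions and singly--far pieces handled, in the dangerous scale range, by the strong Carleson conditions of Assumption~2$'$ together with Cauchy--Schwarz. The paper packages the reduction as a vector-valued version of Theorem~B (Lemma~2.1) rather than re-deriving the Journ\'e covering argument inline, but that is a presentational difference only. Your explanation of why Assumption~2$'$ rather than Assumption~2 is needed (one must first separate the kernel from the atom via Cauchy--Schwarz/Minkowski, and therefore needs an $L^2$-in-$y$ bound pointwise in the far $y$-variable, not a cancellative average $\int s\,dy$) is also essentially the paper's reasoning.

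However, there is a concrete gap in the singly--far estimate as you state it. On the region $\{x_1\notin\gamma I,\ x_2\in\gamma J\}$ with $t_2<\ell(J)$ you apply Cauchy--Schwarz in $x_2$ over the \emph{entire} set $\gamma J$, producing a factor $|\gamma J|^{1/2}\sim\gamma^{m/2}|J|^{1/2}$. The subsequent far-variable decay furnished by Assumption~2$'$ is only of order $\gamma^{-\alpha}$: after the Carleson bound one integrates $l_I^{\alpha}|x_1-z_1|^{-n-\alpha}$ over $(\gamma I)^c$, which gives $\gamma^{-\alpha}$ and nothing better. The net bound is therefore $\gamma^{m/2-\alpha}\,\|a_R\|_2|R|^{1/2}$, which does not decay unless $\alpha>m/2$ --- and no such lower bound on the H\"older exponent is assumed. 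The paper avoids this by first peeling off a fixed dilate: in their $E_2=\{x_1\in\gamma_1 I,\,x_2\notin\gamma J\}$ they split into $E_{2,1}=\{x_1\in 2I\}$ and $E_{2,2}=\{x_1\in\gamma_1 I\setminus 2I\}$; Cauchy--Schwarz is applied only over the fixed cube $2I$ (yielding $|2I|^{1/2}\sim|I|^{1/2}$ with no $\gamma$), while $E_{2,2}$ is swallowed into the doubly--far analysis. In your convention the fix is the same: split $\gamma J=2J\cup(\gamma J\setminus 2J)$, apply Cauchy--Schwarz only over $2J$, and treat $\{x_1\notin\gamma I,\,x_2\in\gamma J\setminus 2J\}$ by the pointwise size/H\"older conditions of Assumption~1 exactly as in the doubly--far case. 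Without this reduction the argument does not close.

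A minor further remark: what Assumption~2$'$(2) actually gives is a bound on the $L^2$-norm of $s_{t_1,t_2}(x_1,\cdot,y_1,\cdot)-s_{t_1,t_2}(x_1,\cdot,y_1',\cdot)$ over $\widehat{J}\times J$ (i.e., in $(x_2,t_2,y_2)$), pointwise in $(x_1,t_1,y_1,y_1')$; it is not pointwise in $y_2$. So the step you describe as Minkowski in $y$ should really be Cauchy--Schwarz in $y$ against $\|a_R\|_2$, as in the paper; a genuine Minkowski-in-$y$ application would require a pointwise-in-$y_2$ control that Assumption~2$'$ does not supply.
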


\begin{theorem}\label{thm2}
Let $\lambda_1,\lambda_2 >2$, $0 < \alpha \leq n(\lambda_1 -2)/2$ and $0 < \beta \leq m(\lambda_2 -2)/2$. Assume that the kernel $K_{t_1,t_2}$ satisfies the \textbf{Assumptions} \textbf{4}, \textbf{5$'$}, \textbf{6}. Then, it holds that
\begin{align}\label{Reg}
\big\| g_{\vec{\lambda}}^*(f) \big\|_{L^1(\R^n\times\R^m)} \lesssim \big\| f \big\|_{H^1(\R^n\times\R^m)},
\end{align}
where the implied constant depends only on the assumptions.
\end{theorem}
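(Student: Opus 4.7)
The plan is to verify the rectangle-atom tail criterion of Theorem B at the exponent $p=1$. Since the $L^2$-boundedness of $g_{\vec\lambda}^*$ under Assumptions 4, 5$'$, 6 is already in hand from the work of Cao--Xue, the task reduces to establishing that for every $H^1$ rectangle atom $a$ supported on $R=I\times J$ (normalised so that $\|a\|_2\le |R|^{-1/2}$),
\begin{equation*}
\iint_{\R^n\times\R^m\setminus \widetilde{R}_\gamma} g_{\vec\lambda}^*(a)(x_1,x_2)\,dx_1 dx_2 \lesssim \gamma^{-\delta}
\end{equation*}
for some $\delta>0$ and every $\gamma\ge 2$. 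I would decompose the complement of $\widetilde{R}_\gamma$ as $\mathcal{R}_1\cup \mathcal{R}_2\cup\mathcal{R}_3$, where $\mathcal{R}_1=(\R^n\setminus \widetilde{I}_\gamma)\times \widetilde{J}_\gamma$, $\mathcal{R}_2=\widetilde{I}_\gamma\times(\R^m\setminus \widetilde{J}_\gamma)$, and $\mathcal{R}_3=(\R^n\setminus \widetilde{I}_\gamma)\times(\R^m\setminus \widetilde{J}_\gamma)$, and treat each region separately.

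For $\mathcal{R}_1$, I would dyadically annulate, writing $\R^n\setminus\widetilde{I}_\gamma=\bigcup_{k\ge 0}(2^{k+1}\gamma I\setminus 2^k\gamma I)$, and on each annulus apply Cauchy--Schwarz to reduce matters to an $L^2$-estimate of $g_{\vec\lambda}^*(a)$ on that annulus. In the defining integral of $g_{\vec\lambda}^*(a)(x)$, the auxiliary $y_1$-variable is split into a "near" region $\{|y_1-c_I|\le 2(t_1+\ell(I))\}$ and its complement. On the near region one uses the vanishing $\int_I a(y_1,y_2)\,dy_1=0$ together with the $y_1$-Hölder estimate in Assumption 4(3) to extract a factor $\ell(I)^{\alpha}$; on the far region one uses the $y_1$-size estimate combined with the decay of the aperture weight $\bigl(t_1/(t_1+|x_1-y_1|)\bigr)^{n\lambda_1}$. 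The thresholds $\lambda_1>2$ and $\alpha\le n(\lambda_1-2)/2$ are exactly what make the resulting $(y_1,t_1)$-integrals converge and yield a decay of order $2^{-k\alpha}\gamma^{-\alpha}$ per annulus; summation in $k$ together with the $|\widetilde{J}_\gamma|^{1/2}$-loss from Cauchy--Schwarz (absorbed by the $L^2$-normalisation of $a$) gives the required $\gamma^{-\delta}$-decay. The region $\mathcal{R}_2$ is handled symmetrically using the $y_2$-cancellation and the analogous restrictions on $\lambda_2,\beta$.

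For $\mathcal{R}_3$ I would decompose dyadically in both variables simultaneously, using the product cancellation $\iint_{I\times J} a\,dy_1 dy_2=0$ together with the full bi-parameter Hölder estimate of Assumption 4(2). The near/far splitting is performed in each of $y_1$ and $y_2$ independently; cross terms (Hölder in one variable and size in the other) are controlled by the mixed estimates of Assumption 4(3). Each $(k,\ell)$-annulus contributes $2^{-k\alpha}2^{-\ell\beta}\gamma^{-\alpha-\beta}$, and the double geometric sum collapses to $\gamma^{-\delta}$ with $\delta=\min(\alpha,\beta)$. Assumption 5$'$ is used as a Carleson-type replacement whenever the decomposition involves averages of $K_{t_1,t_2}$ over $I$ or $J$ after pairing with the atom.

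The main obstacle, beyond the combinatorial bookkeeping, will be the auxiliary $y$-integrations and aperture weights intrinsic to $g_{\vec\lambda}^*$, which are absent in the $g$-function case of Theorem \ref{thm1}. The near/far splits in $y_1$ and $y_2$ must be organised so that the size-versus-Hölder trade-off produces integrable tails in $t_1$ and $t_2$; this is precisely where the hypotheses $\lambda_i>2$ with $\alpha\le n(\lambda_1-2)/2$ and $\beta\le m(\lambda_2-2)/2$ are indispensable. Note that Assumption 6 enters only through the Cao--Xue $L^2$-bound invoked in the Cauchy--Schwarz step, and not directly in the tail estimate; once the three strip estimates above are in place, Theorem B immediately yields \eqref{Reg}.
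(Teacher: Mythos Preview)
Your overall plan follows the paper's strategy --- reduce to the rectangle-atom tail criterion via Theorem~B, then split $(\R^n\times\R^m)\setminus\widetilde R_\gamma$ into strip regions --- but there is a genuine gap in the treatment of $\mathcal{R}_1$ and $\mathcal{R}_2$, and the role of Assumption~5$'$ is misplaced.

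\medskip
\textbf{The gap in $\mathcal{R}_1$ (and symmetrically $\mathcal{R}_2$).} On the annulus $A_k\times\widetilde{J}_\gamma$ your Cauchy--Schwarz step produces a factor $|A_k|^{1/2}|\widetilde{J}_\gamma|^{1/2}\sim (2^k\gamma)^{n/2}|I|^{1/2}\,\gamma^{m/2}|J|^{1/2}$. Using only the $z_1$-cancellation (which is all that is available, since $x_2$ may lie in $2J$), the best localized $L^2$ bound one can prove is $\|g_{\vec\lambda}^*(a)\|_{L^2(A_k\times\R^m)}\lesssim (2^k\gamma)^{-n/2-\alpha}\|a\|_2$. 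Combining these and using $\|a\|_2\le |I|^{-1/2}|J|^{-1/2}$ gives
\[
\int_{A_k\times\widetilde{J}_\gamma} g_{\vec\lambda}^*(a)\;\lesssim\; \gamma^{m/2}\,(2^k\gamma)^{-\alpha},
\]
so the sum over $k$ is $\gamma^{m/2-\alpha}$, which does \emph{not} decay. The factor $\gamma^{m/2}$ coming from $|\widetilde{J}_\gamma|^{1/2}$ is not absorbed by the $L^2$-normalisation of $a$; the normalisation only cancels $|J|^{1/2}$.

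\medskip
\textbf{Where Assumption 5$'$ is actually needed.} In the paper the strips $\mathcal{R}_1,\mathcal{R}_2$ are further split: e.g.\ $\mathcal{R}_2=\{x_1\in 2I,\ x_2\notin\gamma J\}\cup\{x_1\in\gamma I\setminus 2I,\ x_2\notin\gamma J\}$. On the second piece both variables are away from the support and the $\mathcal{R}_3$-type argument (Assumption~4 only) applies. On the first piece $x_1$ lies in $2I$, so Cauchy--Schwarz in $x_1$ costs only $|2I|^{1/2}$ (no $\gamma$-loss), but now the $(x_1,t_1)$-integral over the Carleson box $\widehat{2I}$ must be controlled without any separation in the first variable; this is exactly the job of the \emph{strong} Carleson $\times$ standard estimate Assumption~5$'$. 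All the $\gamma$-decay then comes from the $z_2$-H\"older or size estimate combined with the integral over $x_2\notin\gamma J$. Conversely, on $\mathcal{R}_3$ (both coordinates far) Assumption~4 alone suffices --- no Carleson input is required there --- so your placement of Assumption~5$'$ is inverted.

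\medskip
A secondary point: the paper never performs a near/far split in the auxiliary $y$-variable. Instead it decomposes the $(t_1,t_2)$-domain according to $t_i\lessgtr 2\ell_I, 2\ell_J$ (so that the H\"older hypothesis $|z_i-z_i'|<t_i/2$ can or cannot be invoked), and then disposes of the $y$-integration in one stroke via the elementary convolution estimate $\int_{\R^n}(A+|y|)^{-\theta_1}(B+|x-y|)^{-\theta_2}\,dy\lesssim A^{n-\theta_3}(|x|+B)^{-\theta_0}$. This is where the thresholds $\alpha\le n(\lambda_1-2)/2$, $\beta\le m(\lambda_2-2)/2$ enter; your near/far scheme would have to reproduce the same numerology, but is unnecessary once the convolution lemma is in hand.
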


By interpolation, we have the following corollaries:
\begin{corollary}
Assume that the kernel $s_{t_1,t_2}$ satisfies the \textbf{Assumptions} \textbf{1}, \textbf{2$'$}, \textbf{3}. Then $g$ is bounded on $L^p(\R^n\times\R^m)$ for all $1<p<2$.
\end{corollary}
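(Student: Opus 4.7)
The plan is to interpolate between the two endpoint estimates already in hand. Martikainen's theorem provides $g\colon L^2(\R^n\times\R^m)\to L^2(\R^n\times\R^m)$ under Assumptions 1, 2, 3; since Assumption 2$'$ is strictly stronger than Assumption 2, this $L^2$ bound is available in the present setting. Combined with Theorem \ref{thm1}, which yields $g\colon H^1(\R^n\times\R^m)\to L^1(\R^n\times\R^m)$, interpolation along the scale between $H^1$ and $L^2$ will supply the desired $L^p$ bound for every $1<p<2$.

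To apply linear interpolation despite the sublinearity of $g$, I would first linearise the operator in the standard vector-valued fashion. Let $\mathcal{H}=L^2\bigl(\R_+\times\R_+;\tfrac{dt_1}{t_1}\tfrac{dt_2}{t_2}\bigr)$ and define the linear map $T$ by $Tf(x_1,x_2)=\bigl(\theta_{t_1,t_2}f(x_1,x_2)\bigr)_{t_1,t_2>0}$, so that $g(f)(x)=\|Tf(x)\|_{\mathcal{H}}$. The two endpoint estimates then read $T\colon L^2(\R^n\times\R^m)\to L^2(\R^n\times\R^m;\mathcal{H})$ and $T\colon H^1(\R^n\times\R^m)\to L^1(\R^n\times\R^m;\mathcal{H})$.

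For the interpolation step I would invoke the complex interpolation identity for the product Hardy space,
\[
\bigl[H^1(\R^n\times\R^m),\, L^2(\R^n\times\R^m)\bigr]_{\theta}=L^p(\R^n\times\R^m),\qquad \tfrac{1}{p}=1-\tfrac{\theta}{2},\ 0<\theta<1,
\]
which is part of the Chang--Fefferman theory developed in \cite{CF}. Passing to Hilbert-space-valued targets on the right-hand side preserves the interpolation, yielding $L^p(\R^n\times\R^m;\mathcal{H})$ at the intermediate parameter. Hence $T\colon L^p\to L^p(\mathcal{H})$ is bounded, which is exactly $\|g(f)\|_{L^p}\lesssim\|f\|_{L^p}$ for every $1<p<2$.

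The only genuine subtlety is confirming that the complex interpolation between product $H^1$ and $L^2$ gives the classical intermediate $L^p$ with Hilbert-valued targets; this is routine once one notes that the area/square-function characterisation of product Hardy spaces intertwines with the vector-valued Calder\'on--Torchinsky machinery. An alternative would be to derive a weak-type $(1,1)$ bound directly from Theorem \ref{thm1} via a rectangle-atomic argument and then apply Marcinkiewicz interpolation, but invoking the Chang--Fefferman interpolation is considerably cleaner given what has already been proved.
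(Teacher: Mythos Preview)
Your argument is correct, but the paper takes a different route. Rather than invoking complex interpolation $[H^1(\R^n\times\R^m),L^2(\R^n\times\R^m)]_\theta=L^p$, the paper appeals to the Chang--Fefferman Calder\'on--Zygmund decomposition on product domains (Theorem~C, from \cite{CF2}): for each level $\alpha>0$ and $f\in L^p$, one splits $f=f_1+f_2$ with $\|f_1\|_2^2\le\alpha^{2-p}\|f\|_p^p$ and $\|f_2\|_{H^1}\le c\,\alpha^{1-p}\|f\|_p^p$, and then reads off the $L^p$ bound by integrating the distribution function of $g(f)$. This real-method argument works directly for sublinear operators (Lemma~\ref{lemma4}), so no linearisation is needed. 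Your approach is cleaner conceptually once the complex interpolation identity for product $H^1$ is taken for granted, and your vector-valued linearisation is exactly the $\mathcal H_1$-formalism already set up in Section~\ref{Sec-30}; the paper's approach, on the other hand, is more self-contained because the Calder\'on--Zygmund lemma is quoted explicitly, whereas the complex interpolation statement for product Hardy spaces, while true, is not stated in \cite{CF} itself and would require an additional reference.
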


\begin{corollary}
Let $\lambda_1,\lambda_2 >2$, $0 < \alpha \leq n(\lambda_1 -2)/2$ and $0 < \beta \leq m(\lambda_2 -2)/2$. Assume that the kernel $K_{t_1,t_2}$ satisfies the \textbf{Assumptions} \textbf{4}, \textbf{5$'$}, \textbf{6}. Then $g_{\vec{\lambda}}^*$ is bounded on $L^p(\R^n\times\R^m)$ for all $1<p<2$.
\end{corollary}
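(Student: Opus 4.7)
The plan is to invoke R.\ Fefferman's criterion (Theorem B) with $p=1$: it suffices to show that for every $H^{1}(\R^{n}\times\R^{m})$ rectangle atom $a$ supported on $R=I\times J$,
$$\iint_{(\widetilde R_{\gamma})^{c}} g_{\vec\lambda}^{*}(a)(x)\,dx\;\lesssim\; \gamma^{-\delta},\qquad \gamma\ge 2,$$
for some fixed $\delta>0$. After translation we may assume $R$ is centred at $0$ with $\ell(I)=s_{1}$, $\ell(J)=s_{2}$, and we partition $(\widetilde R_{\gamma})^{c}$ into the two ``tube'' pieces $E_{1}=(\R^{n}\setminus\gamma I)\times\gamma J$, $E_{2}=\gamma I\times(\R^{m}\setminus\gamma J)$ and the ``corner'' piece $E_{3}=(\R^{n}\setminus\gamma I)\times(\R^{m}\setminus\gamma J)$. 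By the $I\leftrightarrow J$ symmetry it is enough to treat $E_{1}$ and $E_{3}$.

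The central device is a dyadic decomposition of the Poisson-type weight in the definition of $g_{\vec\lambda}^{*}$. Partition the domain of the defining integral into the annular cones
$$T_{k_{1},k_{2}}(x)=\bigl\{(y,t):\,2^{k_{i}-1}t_{i}\le t_{i}+|x_{i}-y_{i}|<2^{k_{i}}t_{i},\ i=1,2\bigr\},$$
for $k_{1},k_{2}\ge 0$. On $T_{k_{1},k_{2}}(x)$ the weight equals $2^{-nk_{1}\lambda_{1}-mk_{2}\lambda_{2}}$, and after the natural change of variables $(y_{i}-x_{i})\mapsto 2^{-k_{i}}(y_{i}-x_{i})$ (which introduces an aperture factor $2^{nk_{1}+mk_{2}}$) we obtain
$$g_{\vec\lambda}^{*}(a)(x)^{2}\;\lesssim\;\sum_{k_{1},k_{2}\ge 0} 2^{-nk_{1}(\lambda_{1}-2)}\,2^{-mk_{2}(\lambda_{2}-2)}\,S_{k_{1},k_{2}}(a)(x)^{2},$$
where $S_{k_{1},k_{2}}(a)$ is the bi-parameter Lusin area integral with apertures $(2^{k_{1}},2^{k_{2}})$. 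Taking square roots, integrating over $E_{j}$, and using $\lambda_{i}>2$ reduces the problem to
$$\iint_{E_{j}} S_{k_{1},k_{2}}(a)(x)\,dx\;\lesssim\; 2^{nk_{1}\sigma_{1}}2^{mk_{2}\sigma_{2}}\gamma^{-\delta_{0}}$$
with exponents $\sigma_{i}<(\lambda_{i}-2)/2$; the thresholds $\alpha\le n(\lambda_{1}-2)/2$ and $\beta\le m(\lambda_{2}-2)/2$ provide exactly the slack needed.

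To prove the aperture estimates, on $E_{3}$ we exploit both vanishing moments of $a$ by Taylor-expanding $K_{t_{1},t_{2}}(y-z)$ around the centre of $R$ in $z_{1}$ and $z_{2}$; Assumption 4(2) then yields the decay $s_{1}^{\alpha}s_{2}^{\beta}d_{1}^{-n-\alpha}d_{2}^{-m-\beta}$ in the distances $d_{i}=\dist(x_{i},R)$, which integrates against a positive power of $\gamma^{-1}$ over $E_{3}$. On $E_{1}$ only the $z_{1}$-cancellation is directly usable, so the $(y_{2},t_{2})$-integration is split at the intrinsic atom scale $s_{2}$: in the range $t_{2}\ge s_{2}$ we invoke Assumption 5$'$(1) and the $L^{2}$ size of the atom in the $x_{2}$-slice; in the range $t_{2}<s_{2}$ the second vanishing of $a$ is re-introduced via Assumption 5$'$(2), and the bi-parameter Carleson condition (Assumption 6) is used to close the remaining double integral by a Cauchy--Schwarz/$L^{2}$ argument parallel to the one in the proof of Theorem \ref{thm1}. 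In every case the Hölder exponents feed the required $\gamma^{-\delta_{0}}$ tail.

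The main obstacle is the book-keeping across the six simultaneous scales $s_{1},s_{2},t_{1},t_{2},2^{k_{1}},2^{k_{2}}$: one must quantify precisely how the aperture dilation by $2^{k_{i}}$ inflates the constants appearing in Assumptions 5$'$ and 6, and then verify that the aperture loss $2^{nk_{1}+mk_{2}}$ is absorbed by the weight gain $2^{-nk_{1}(\lambda_{1}-2)}2^{-mk_{2}(\lambda_{2}-2)}$ with enough room left to pay for the Hölder exponents $\alpha$ and $\beta$. It is exactly this margin that forces the numerical hypotheses $\lambda_{i}>2$ and $\alpha\le n(\lambda_{1}-2)/2$, $\beta\le m(\lambda_{2}-2)/2$; weakening either would break the convergence of the dyadic sum and make the method fail.
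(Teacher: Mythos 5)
Your argument, even granting every technical step, proves at most the $H^{1}(\R^{n}\times\R^{m})\to L^{1}(\R^{n}\times\R^{m})$ bound for $g_{\vec\lambda}^{*}$ — indeed Theorem~B with $p=1$ cannot give anything more, since Theorem~B only applies for $0<p\le 1$. Nowhere do you pass from that endpoint estimate to boundedness on $L^{p}$ for $1<p<2$, which is what the corollary asserts. The paper's proof of the corollary is the observation that this transition is standard: it invokes the Chang--Fefferman Calder\'on--Zygmund decomposition (Theorem~C), which splits any $f\in L^{p}$, $1<p<2$, as $f=f_{1}+f_{2}$ with $f_{1}\in L^{2}$ and $f_{2}\in H^{1}$ and quantitatively controlled norms, so that $H^{1}\to L^{1}$ plus $L^{2}\to L^{2}$ boundedness (the latter being the Cao--Xue result) yields $L^{p}\to L^{p}$ by a weak-type/good-$\lambda$ or Marcinkiewicz-type argument. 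You must supply this step, or cite it; without it the conclusion of the corollary does not follow.

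A separate structural point: the $H^{1}\to L^{1}$ bound you are trying to establish is precisely Theorem~\ref{thm2}, which the paper has already proved in Section~\ref{Sec-300}. For the corollary you should simply invoke Theorem~\ref{thm2} and the known $L^{2}$ bound from \cite{CX}, and then interpolate. Re-deriving Theorem~\ref{thm2} from scratch is unnecessary, and the route you propose (dyadic aperture decomposition $g_{\vec\lambda}^{*}\lesssim\sum_{k_{1},k_{2}}2^{-nk_{1}(\lambda_{1}-2)/2}2^{-mk_{2}(\lambda_{2}-2)/2}S_{2^{k_{1}},2^{k_{2}}}$ followed by atom estimates on each area integral) is genuinely different from the paper's direct estimate, which splits the $(t_{1},t_{2})$-domain into pieces $F_{1},\dots,F_{4}$ and applies the H\"older, size, and Carleson conditions of Assumptions~4 and 5$'$ by hand. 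Your aperture approach is morally plausible but would require carefully tracking how the constants in Assumptions~5$'$ and 6 inflate under dilation by $2^{k_{i}}$ — a nontrivial verification you flag but do not carry out — whereas the paper's route avoids this entirely. In any case, for the corollary itself, interpolation is the intended (and essentially the only) remaining step.
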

The organizations of this paper are as follows: Section $\ref{Sec-30}$ and Section $\ref{Sec-300}$ will be devoted to demonstrate Theorem \ref{thm1} and Theorem \ref{thm2}. In section $\ref{Sec-3000}$, we will briefly give the proofs of the Corollaries.
\section{Proof of Theorem 1.1}\label{Sec-30}
In order to demonstrate Theorem \ref{thm1}, we first introduce some notions and a key lemma.

Let $\mathcal{H}_i$ $(i=1,2)$ be the Hilbert space defined by
\begin{align*}
\mathcal{H}_i=\{f: (0,\infty)\times(0,\infty) \rightarrow \mathbb{C} \ \ \ \text{is measurable},\ \  |f|_{\mathcal{H}_i}<\infty\},
\end{align*}
where
\begin{align*}
|f|_{\mathcal{H}_1}=\bigg\{\int_0^{\infty}\int_0^{\infty}|f(t_1,t_2)|^2\frac{dt_1}{t_1}\frac{dt_2}{t_2}\bigg\}^{1/2}
\end{align*}
and
\begin{align*}
|f|_{\mathcal{H}_2}=\bigg\{\int_{\R^{m+1}_+}\int_{\R^{n+1}_+}|f(t_1,t_2,y_1,y_2)|^2\frac{dy_1dt_1}{t_1}\frac{dy_2dt_2}{t_2}\bigg\}^{1/2}.
\end{align*}
Hence, $g$ and $g_{\vec{\lambda}}^*$ can be written by
\begin{align*}
g(f)(x)= |\{\theta_{t_1,t_2} f(x_1,x_2)\}_{t_1,t_2>0}|_{\mathcal{H}_1}:=|T_1|_{\mathcal{H}_1},
\end{align*}
and
\begin{align*}
g_{\vec{\lambda}}^*(f)(x)&= \bigg|\bigg\{\frac{1}{t_1^nt_2^m}\Big(\frac{t_1}{t_1 + |x_1 - y_1|}\Big)^{\frac{n \lambda_1}{2}}\Big(\frac{t_2}{t_2 + |x_2 - y_2|}\Big)^{\frac{m \lambda_2}{2}}\vartheta_{t_1,t_2} f(x_1,x_2)\bigg\}_{t_1,t_2>0}\bigg|_{\mathcal{H}_2}\\
&:=|T_2|_{\mathcal{H}_2}.
\end{align*}
For $p>0$ and $i=1, 2$, we define
\begin{align*}
L_{\mathcal{H}_i}^p(\R^n\times\R^m)=\bigg\{T_i(x_1,x_2):\R^n\times\R^m\rightarrow \mathcal{H}_i\ \ \text{and}\ \ \norm{T_i}_{L^p_{\mathcal{H}_i}(\R^n\times\R^m)}<\infty\bigg\}
\end{align*}
where
$
\norm{T_i}_{L^p_{\mathcal{H}_i}(\R^n\times\R^m)}=
\Big\{\iint_{\R^n\times\R^m}|T_i|^p_{\mathcal{H}_i}dx_1dx_2\Big\}^{1/p}.
$

The following vector-valued version of Theorem B provide a foundation for our proof.
\begin{lemma}\label{lemma}
Let $T_i$ ($i=1,2$) be an $\mathcal{H}_i$-valued linear operator which is bounded from $L^2(\R^n\times\R^m)$ to $L^2_{\mathcal{H}_i}(\R^n\times\R^m)$, and $0<p\leq 1$. Suppose further that there exist constants $C>0$ such that for all $H^p(\R^n\times\R^m)$ rectangle atoms a supported on $R$, it holds that
\begin{align*}
\iint_{\R^n\times\R^m\setminus\widetilde{R}_{\gamma}}|T_i(a)(x_1,x_2)|^p_{\mathcal{H}_i}dx_1dx_2\leq C\gamma^{-\delta}\text {\ \ for all \ }\gamma\geq 2 \hbox{\ and \ some \ fixed \ }\delta>0,
\end{align*}
where $\widetilde{R}_{\gamma}$ denotes the $\gamma$ fold enlargement of $R$. Then there exists a constant $C>0$ such that for all $H^p(\R^n\times\R^m)$-atom $\widetilde{a}$, we have
\begin{align*}
\norm{T_i(\widetilde{a})}_{L_{\mathcal{H}_i}^p(\R^n\times\R^m)}\leq C.
\end{align*}
\end{lemma}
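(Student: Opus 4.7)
The plan is to adapt R.~Fefferman's argument for the scalar Theorem~B to the vector-valued setting, where the Hilbert space $\mathcal{H}_i$ plays a purely passive role. Fix an $H^p(\R^n\times\R^m)$ atom $\widetilde{a}$ supported on an open set $\Omega$ of finite measure, with atomic decomposition $\widetilde{a}=\sum_{R\in\mathcal{M}(\Omega)} a_R$ as in Definition~1.2. Introduce the enlargement $\Omega^{*}=\{x: M_s(\chi_\Omega)(x)>1/2\}$, where $M_s$ is the strong maximal function on $\R^n\times\R^m$; its $L^2$ boundedness yields $|\Omega^{*}|\lesssim|\Omega|$. I would then split the $L^p_{\mathcal{H}_i}$ norm into the contributions from $\Omega^{*}$ and from $(\Omega^{*})^c$.

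For the near piece, Hölder's inequality with exponents $(2/p,\, 2/(2-p))$, combined with the hypothesized $L^2\!\to L^2_{\mathcal{H}_i}$ bound on $T_i$ and the size estimate $\|\widetilde{a}\|_2\leq|\Omega|^{1/2-1/p}$ from Definition~1.2(i), gives
$$\iint_{\Omega^{*}} |T_i\widetilde{a}|_{\mathcal{H}_i}^{p}\,dx \;\leq\; |\Omega^{*}|^{1-p/2}\,\|T_i\widetilde{a}\|_{L^2_{\mathcal{H}_i}}^p \;\lesssim\; |\Omega|^{1-p/2}\cdot|\Omega|^{p/2-1} \;=\; 1.$$
This step is formally identical to the scalar case and is insensitive to the Hilbert-space structure.

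The far piece is the substantive part. For each maximal $R=I\times J\in\mathcal{M}(\Omega)$, I would pick $\gamma(R)\geq 2$ as large as possible subject to $\widetilde{R}_{\gamma(R)}\subset\Omega^{*}$, so that $(\Omega^{*})^c\subset\R^n\times\R^m\setminus\widetilde{R}_{\gamma(R)}$. Setting $\alpha_R=\|a_R\|_2\,|R|^{1/p-1/2}$, the function $a_R/\alpha_R$ is (after the harmless passage $R\mapsto 2R$, which only shifts $\gamma$ by a bounded factor) an $H^p$ rectangle atom per Definition~1.3, so the hypothesis yields $\iint_{(\Omega^{*})^c} |T_i(a_R)|_{\mathcal{H}_i}^{p}\,dx \lesssim \alpha_R^{p}\,\gamma(R)^{-\delta}$. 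Summing over $R$ and applying Hölder with the same exponents as before,
$$\sum_R \alpha_R^{p}\,\gamma(R)^{-\delta} \;=\; \sum_R \|a_R\|_2^p\,|R|^{1-p/2}\gamma(R)^{-\delta} \;\leq\; \Big(\sum_R\|a_R\|_2^{2}\Big)^{p/2}\Big(\sum_R |R|\,\gamma(R)^{-2\delta/(2-p)}\Big)^{(2-p)/2}.$$
Definition~1.2(ii) bounds the first factor by $|\Omega|^{p/2-1}$, while Journé's covering lemma bounds the second by $|\Omega|^{(2-p)/2}$; the product is $1$, as desired.

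The main obstacle is the deployment of Journé's covering lemma with the isotropic enlargement $\widetilde{R}_\gamma$ used in the hypothesis. The standard remedy is to introduce two anisotropic dilation factors $\gamma_1(R),\gamma_2(R)$ and dominate $\gamma(R)^{-\delta}\leq\gamma_1(R)^{-\delta/2}\gamma_2(R)^{-\delta/2}$, then invoke Journé's lemma in the form $\sum_R|R|\gamma_1(R)^{-\delta_1}\gamma_2(R)^{-\delta_2}\lesssim|\Omega|$, valid for $\delta_1,\delta_2$ sufficiently large. Thus the decay $\delta$ in the hypothesis must be large enough; in the forthcoming applications to $g$ and $g_{\vec{\lambda}}^{*}$ the rectangle-atom estimates are established with arbitrary polynomial decay, so this causes no genuine difficulty.
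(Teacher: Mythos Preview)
Your approach is exactly the one the paper indicates: the paper's ``proof'' is a one-sentence remark stating that R.~Fefferman's argument for Theorem~B goes through verbatim once $T(a)$ and $L^p$ are replaced by $T_i(a)$ and $L^p_{\mathcal{H}_i}$, which is precisely the adaptation you outline. Your sketch is in fact more detailed than anything the paper provides.

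One correction to your final paragraph: Journ\'e's covering lemma does \emph{not} require the exponent to be large; for every $\eta>0$ one has $\sum_{R\in\mathcal{M}(\Omega)}|R|\,\gamma(R)^{-\eta}\le C_\eta|\Omega|$, with a constant depending on $\eta$. So the hypothesis ``some fixed $\delta>0$'' is already sufficient, and your worry is unfounded. This is fortunate, since your claim that the applications later in the paper yield ``arbitrary polynomial decay'' is not accurate---the estimates obtained there are $\gamma^{-\min\{\alpha,\beta\}}$ and $\gamma^{-\beta/2}$, which are fixed positive exponents, not arbitrarily large ones.
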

\begin{remark}
The proof of Lemma \ref{lemma} follows from the same idea of R. Fefferman. One only needs to replace $T(a)$ and $L^p(\R^n\times\R^m)$ in the proof given in \cite{Fe} by $T_i(a)$ and $L^p_{\mathcal{H}_i}(\R^n\times\R^m)$, respectively. Here, we omit the proof.
\end{remark}

\begin{proof}[Proof of Theorem~\ref{thm1}]
It suffices to verify that for any $H^1(\R^n\times\R^m)$-atom $\widetilde{a}$, there exists a constant $C>0$, such that
\begin{align}\label{Inq13}
\norm{g(\widetilde{a})}_{L^1(\R^n\times\R^m)}=\norm{T_1(\widetilde{a})}_{L^1_{\mathcal{H}_1}(\R^n\times\R^m)}\leq C.
 \end{align}
 In fact, for any $f\in H^1(\R^n\times\R^m)$, using atomic decomposition (Theorem A), we may split $f$ by $f=\sum_j^{\infty}\lambda_j\widetilde{a}_j$, where $\widetilde{a}_j$ is $H^1(\R^n\times\R^m)$-atom. By the size condition of $s_{t_1,t_2}$ and Fubini theorem, for all $t_1,t_2>0$, we have
 \begin{align*}
 &|\sum_{j=1}^{\infty}\lambda_j\iint_{\R^n\times\R^m}T_1(\widetilde{a}_j)dx_1dx_2|\\
  &\lesssim\sum_{j=1}^{\infty}|\lambda_j|\iint_{\R^n\times\R^m}\iint_{\R^n\times\R^m}
 |s_{t_1,t_2}(x_1,x_2,y_1,y_2)|dx_1dx_2 |\widetilde{a_j}(y_1,y_2)|dy_1dy_2\\
  &\lesssim\sum_{j=1}^{\infty}|\lambda_j|\iint_{\R^n\times\R^m}\iint_{\R^n\times\R^m}
 \frac{t_1^{\alpha}}{(t_1+|x_1-y_1|)^{n+\alpha}}\frac{t_2^{\beta}}{(t_2+|x_2-y_2|)^{n+\beta}}dx_1dx_2\\
  &\quad\quad\quad|\widetilde{a_j}(y_1,y_2)|dy_1dy_2\\
  &\lesssim\sum_{j=1}^{\infty}|\lambda_j|\iint_{\R^n\times\R^m}|\widetilde{a_j}(y_1,y_2)|dy_1dy_2\lesssim\sum_{j=1}^{\infty}|\lambda_j|.
 \end{align*}
Therefore, by Lebesgue dominated convergence theorem, it implies that
 \begin{equation}\label{ae}
 T_1(f)=\sum_{j=1}^{\infty}\lambda_jT_1(\widetilde{a}_j),\ \ a. e.
 \end{equation}
Thus, by the above property (\ref{ae}), inequality (\ref{Inq13}) and Fatou's lemma, we have
 \begin{align}\label{iin9}
 \norm{T_1(f)}_{L^1_{\mathcal{H}_1}(\R^n\times\R^m)}&=\int_{\R^n\times\R^m}|
 \sum_{j=1}^{\infty}\lambda_jT_1(\widetilde{a}_j)(x_1,x_2)|_{\mathcal{H}_1}dx_1dx_2\\ \notag
 &\leq\int_{\R^n\times\R^m}\sum_{j=1}^{\infty}|\lambda|_j|
 T_1(\widetilde{a}_j)(x_1,x_2)|_{\mathcal{H}_1}dx_1dx_2\\ \notag
 &\leq\sum_{j=1}^{\infty}|\lambda|_j\norm{T_1(\widetilde{a}_j)}_{L^1_{\mathcal{H}_1}(\R^n\times\R^m)}\\ \notag
 &\leq\sum_{j=1}^{\infty}|\lambda|_j\lesssim\norm{f}_{H^1(\R^n\times\R^m)}.
 \end{align}
Hence, to prove Theorem \ref{thm1}, it is sufficient to prove inequality (\ref{Inq13}).\par
 Let $a$ be a $H^1(\R^n\times\R^m)$ rectangle atom, supported on a rectangle $R=I\times J$. Note that $g$-function is bounded on $L^2(\R^n\times\R^m)$. That is, $T_1$ is bounded from $L^2(\R^n\times\R^m)$ to $L^2_{\mathcal{H}_1}(\R^n\times\R^m)$. Thus, by Lemma \ref{lemma}, in order to prove inequality $(\ref{Inq13})$, it suffices to verify that 
\begin{align}\label{Re1}
\iint_{\R^n\times\R^m\setminus\widetilde{R}_{\gamma}}|g(a)(x_1,x_2)|dx_1dx_2\lesssim\gamma^{-\delta}, \quad \text {\ \ for all $\gamma\geq 2$}.
\end{align}
Set $\gamma_1,\gamma_2\geq 2$ and $\gamma_1,\gamma_2\sim\gamma$, it's easy to see that
\begin{align*}
\widetilde{R}_{\gamma}&\subset\{(x_1,x_2)\in\R^n\times\R^m: x_1\notin\gamma_1 I,x_2\notin\gamma J\}\cup
\{(x_1,x_2)\in\R^n\times\R^m: x_1\in\gamma_1 I,x_2\notin\gamma J\}\\
&\cup\{(x_1,x_2)\in\R^n\times\R^m: x_1\notin\gamma I,x_2\notin\gamma_2J\}\cup
\{(x_1,x_2)\in\R^n\times\R^m: x_1\notin\gamma I,x_2\in\gamma_2J\}\}\\
&=:E_1\cup E_2\cup E_3\cup E_4.
\end{align*}
By symmetry, to prove (\ref{Re1}), and thus to finish the proof of Theorem \ref{thm1}, we only need to prove that there exists a positive $\delta$, such that
\begin{align}\label{Re2}
\iint_{E_1}|g(a)(x_1,x_2)|dx_1dx_2\lesssim\gamma^{-\delta} \text {\ \ for all $\gamma\geq 2$}
\end{align}
and
\begin{align}\label{Re3}
\iint_{E_2}|g(a)(x_1,x_2)|dx_1dx_2\lesssim\gamma^{-\delta} \text {\ \ for all $\gamma\geq 2$}.
\end{align}

Now let us begin with the proof of inequality
(\ref{Re2}).\\
\noindent
 \bull
\textbf {Proof of (\ref{Re2}).}
By the definition of $g$ and support condition of $a$, we get
\begin{align*}
|g(a)(x)|^2= \int_0^{\infty}\int_0^{\infty}|\iint_{I\times J} s_{t_1,t_2}(x_1,x_2,y_1,y_2)a(y_1,y_2) dy_1 \ dy_2|^2 \frac{dt_1}{t_1} \frac{dt_2}{t_2}.
\end{align*}
 In order to estimate $|g(a)(x)|^2$ (where $x\in E_1$), we splitting the domain of variable $t_1$ and $t_2$ as follows,
\begin{align*}
(t_1,t_2)\in \R^+\times\R^+\subset&\big([|x_1-z_1|,\infty)\times[|x_2-z_2|,\infty)\big)\cup\big([|x_1-z_1|,\infty)\times(0,|x_2-z_2|)\big)\\
&\cup\big((0,|x_1-z_1|)\times[|x_2-z_2|,\infty)\big)\cup\big((0,|x_1-z_1|)\times(0,|x_2-z_2|)\big)\\
&=:F_1\cup F_2\cup F_3\cup F_4.
\end{align*}
Hence, we may write
\begin{align*}
|g(a)(x)|^2&\leq \sum_{i=1}^{4}\int_0^{\infty}\int_0^{\infty}|\iint_{I\times J} \mathbf{1}_{F_i}(t_1,t_2)s_{t_1,t_2}(x_1,x_2,y_1,y_2)a(y_1,y_2) dy_1 \ dy_2|^2 \frac{dt_1}{t_1} \frac{dt_2}{t_2}\\
&=:\sum_{i=1}^4A_i.
\end{align*}
\quad Let $z_1$ be the centre of $I$ and $z_2$ be the centre of $J$, denote $l_I$ and $l_J$ as the sidelength of $I$ and $J$, respectively. Note that, if $x\in E_1$ and $y\in I\times J$, then $|x_1-y_1|\sim|x_1-z_1|$ and $|x_2-y_2|\sim|x_2-z_2|$. We will continue to use the above notions in the rest of the paper, moreover, we always denote $$\Phi^a_{t_1,t_2}(x,y,z): =\frac{|y_1 - z_1|^{\alpha}}{(t_1 + |x_1 - y_1|)^{n+\alpha}} \frac{|y_2 - z_2|^{\beta}}{(t_2 + |x_2 - y_2|)^{m+\beta}}|a(y_1,y_2)|$$
\noindent
\textbf {Estimate for $A_1$.} Since $x\in E_1$ and $(t_1,t_2)\in F_1$, then for any $y_1\in I$, we have $t_1\geq |x_1-z_1|\geq\gamma l_I\geq 2|y_1-z_1|$ and $t_1\gtrsim|x_1-y_1|$. Similarly, for any $y_2\in J$, we have $t_1\geq 2|y_1-z_1|$. The definition of $A_1$, vanishing and size condition of $H^1(\R^n\times\R^m)$ rectangle atom and H\"{o}lder estimate of $s_{t_1,t_2}$ imply that
\begin{align*}
A_1&= \int_0^{\infty}\int_0^{\infty}|\iint_{I\times J} \mathbf{1}_{F_1}(t_1,t_2)(s_{t_1,t_2}(x,y) - s_{t_1,t_2}(x,(y_1,z_2))\\
 &\quad- s_{t_1,t_2}(x,(z_1,y_2)) + s_{t_1,t_2}(x,(z_1,z_2)))a(y_1,y_2) d\vec{y}|^2 \frac{dt_1dt_2}{t_1t_2}\\
&\lesssim \int_{|x_1-z_1|}^{\infty}\int_{|x_2-z_2|}^{\infty}|\iint_{I\times J}\Phi^a_{t_1,t_2}(x,y,z)dy_1dy_2|^2 \frac{dt_1}{t_1} \frac{dt_2}{t_2}\\
&\lesssim l_I^{2\alpha}l_J^{2\beta}\int_{|x_1-z_1|}^{\infty}\int_{|x_2-z_2|}^{\infty}\frac{dt_1}{t_1^{2n+2\alpha+1}} \frac{dt_2}{t_2^{2n+2\beta+1}}
|\iint_{I\times J}|a(y_1,y_2)| dy_1dy_2|^2\\
&\lesssim l_I^{2\alpha}l_J^{2\beta}|x_1-z_1|^{-2n-2\alpha}|x_2-z_2|^{-2m-2\beta}.
\end{align*}
\noindent
\textbf {Estimate for $A_2$.} In this case, $x\in E_1$ and $(t_1,t_2)\in F_2$. Note that
\begin{align*}
F_2&\subset\big([|x_1-z_1|,\infty)\times(0,2|y_2-z_2|)\big)\cup\big([|x_1-z_1|,\infty)\times(2|y_2-z_2|,|x_2-z_2|)\big)\\
&=:F_{2,1}\cup F_{2,2}.
\end{align*}
Then, one may deduce that
\begin{align*}
A_2&\leq\sum_{i=1}^2\int_0^{\infty}\int_0^{\infty}|\iint_{I\times J} \mathbf{1}_{F_{2,i}}(t_1,t_2)s_{t_1,t_2}(x_1,x_2,y_1,y_2)a(y_1,y_2) dy_1 \ dy_2|^2 \frac{dt_1}{t_1} \frac{dt_2}{t_2}\\
&=:A_{2,1}+A_{2,2}.
\end{align*}
First, we consider the contribution of $A_{2,1}$.
Note that if $x\in E_1$ and $(t_1,t_2)\in F_{2,1}$, then for any $y_1\in I$, it holds that $t_1\geq 2|y_1-z_1|$ and $t_1\gtrsim|x_1-y_1|\sim|x_1-z_1|$. In addition, $t_2<2|y_2-z_2|<2l_J<|x_2-y_2|\sim|x_2-z_2|$. By the vanishing and size condition of $H^1(\R^n\times\R^m)$ rectangle atom and mixed H\"{o}lder and size conditions of $s_{t_1,t_2}$, we obtain
\begin{align*}
A_{2,1}&= \int_0^{\infty}\int_0^{\infty}|\iint_{I\times J} \mathbf{1}_{F_{2,1}}(t_1,t_2)(s_{t_1,t_2}(x,y)
- s_{t_1,t_2}(x,(z_1,y_2)))a(y_1,y_2)  d\vec{y}|^2 \frac{dt_1}{t_1} \frac{dt_2}{t_2}\\
&\lesssim \int_{|x_1-z_1|}^{\infty}\int_{0}^{2|y_2-z_2|}[\iint_{I\times J}\frac{|y_1 - z_1|^{\alpha}}{(t_1 + |x_1 - y_1|)^{n+\alpha}} \frac{t_2^{\beta}}{(t_2 + |x_2 - y_2|)^{m+\beta}}|a(y_1,y_2)|  d\vec{y}]^2 \frac{dt_1t_2}{t_1t_2}\\
&\lesssim l_I^{2\alpha}|x_2-z_2|^{-2m-2\beta}\int_{|x_1-z_1|}^{\infty}\int_{0}^{2|y_2-z_2|}\frac{t_2^{\beta-1}}{t_1^{2n+2\alpha+1}} dt_1dt_2
|\iint_{I\times J}|a(y_1,y_2)| dy_1dy_2|^2\\
&\lesssim l_I^{2\alpha}l_J^{2\beta}|x_1-z_1|^{-2n-2\alpha}|x_2-z_2|^{-2m-2\beta}.
\end{align*}
Nextly, we consider $A_{2,2}$.\par
Since $x\in E_1$ and $(t_1,t_2)\in F_{2,2}$, then for any $y_1\in I$, we have $t_1\geq |x_1-z_1|\geq\gamma l_I\geq 2|y_1-z_1|$ and $t_1\gtrsim|x_1-y_1|$. Similarly, for any $y_2\in J$ satisfies $|x_2-y_2|\geq t_1\geq 2|y_1-z_1|$. Set $\varepsilon<\min\{\alpha,\beta\}$. The vanishing and size condition of $H^1(\R^n\times\R^m)$ rectangle atom and H\"{o}lder estimate of $s_{t_1,t_2}$ yield that
\begin{align*}
A_{2,2}&= \int_0^{\infty}\int_0^{\infty}|\iint_{I\times J} \mathbf{1}_{F_{2,2}}(t_1,t_2)(s_{t_1,t_2}(x,y) - s_{t_1,t_2}(x,(y_1,z_2))\\
 &\quad- s_{t_1,t_2}(x,(z_1,y_2)) + s_{t_1,t_2}(x,(z_1,z_2)))a(y_1,y_2) dy_1 \ dy_2|^2 \frac{dt_1}{t_1} \frac{dt_2}{t_2}\\
&\lesssim \int_{|x_1-z_1|}^{\infty}\int_{2|y_2-z_2|}^{|x_2-y_2|}|\iint_{I\times J}\Phi^a_{t_1,t_2}(x,y,z) dy_1dy_2|^2 \frac{dt_1}{t_1} \frac{dt_2}{t_2}\\
&\lesssim l_I^{2\alpha}l_J^{2\beta}\big(\int_{|x_1-z_1|}^{\infty}\frac{dt_1}{t_1^{2n+2\alpha+1}}\big)
\int_{2|y_2-z_2|}^{|x_2-y_2|}|\iint_{I\times J}\frac{|x_2 - y_2|^{\varepsilon}}{t_2^{\varepsilon}|x_2 - y_2|^{m+\beta}}|a(y_1,y_2)| dy_1dy_2|^2\frac{dt_2}{t_2}\\
&\lesssim l_I^{2\alpha}l_J^{2\beta-2\varepsilon}|x_1-z_1|^{-2n-2\alpha}|x_2-z_2|^{-2m-2\beta+2\varepsilon}.
\end{align*}
Thus, it yields that
\begin{align*}
A_2&\lesssim A_{2,1}+A_{2,2}\\
&\lesssim l_I^{2\alpha}l_J^{2\beta}|x_1-z_1|^{-2n-2\alpha}|x_2-z_2|^{-2m-2\beta}+ l_I^{2\alpha}l_J^{2\beta-2\varepsilon}|x_1-z_1|^{-2n-2\alpha}|x_2-z_2|^{-2m-2\beta+2\varepsilon}.
\end{align*}
\textbf {Estimate for $A_3$.} Since $A_3$ is symmetric with $A_2$, we may obtain that
\[A_3\lesssim l_I^{2\alpha}l_J^{2\beta}|x_1-z_1|^{-2n-2\alpha}|x_2-z_2|^{-2m-2\beta}+ l_I^{2\alpha-2\varepsilon}l_J^{2\beta}|x_1-z_1|^{-2n-2\alpha+2\varepsilon}|x_2-z_2|^{-2m-2\beta}.\]
\textbf {Estimate for $A_4$.} Recall that $F_4=(0,|x_1-z_1|)\times(0,|x_2-z_2|)$. In order to estimate $A_4$, we further split $F_4$ as follows,
\begin{align*}
F_4&\subset\big((2|y_1-z_1|,|x_1-z_1|)\times(2|y_1-z_1|,|x_2-z_2|)\big)\cup\big((0,2|y_1-z_1|)\times(0,2|y_2-z_2|)\big)\\
&\quad\cup\big((2|y_1-z_1|,|x_1-z_1|)\times(0,2|y_2-z_2|)\big)\cup\big((0,2|y_1-z_1|)\times(2|y_1-z_1|,|x_1-z_1|)\big)\\
&\quad =:F_{4,1}\cup F_{4,2}\cup F_{4,3}\cup F_{4,4}.
\end{align*}
Therefore, we have
\begin{align*}
A_4&\leq \sum_{i=1}^{4}\int_0^{\infty}\int_0^{\infty}|\iint_{I\times J} \mathbf{1}_{F_{4,i}}(t_1,t_2)s_{t_1,t_2}(x_1,x_2,y_1,y_2)a(y_1,y_2) dy_1dy_2|^2 \frac{dt_1}{t_1} \frac{dt_2}{t_2}\\
&=:\sum_{i=1}^4A_{4,i}.
\end{align*}
Now, we begin with the estimate of $A_{4,1}$.\par
Note that if $x\in E_1$ and $(t_1,t_2)\in F_{4,1}$, then for any $y_1\in I$ and $y_2\in J$, we have $|x_1-z_1|\geq t_1\geq 2|y_1-z_1|$ and $|x_2-z_2|\geq t_2\geq 2|y_2-z_2|$. Thus, by the vanishing and size condition of $H^1(\R^n\times\R^m)$ rectangle atom and H\"{o}lder conditions of $s_{t_1,t_2}$, we get
\begin{align*}
A_{4,1}&= \int_0^{\infty}\int_0^{\infty}|\iint_{I\times J} \mathbf{1}_{F_{4,1}}(t_1,t_2)(s_{t_1,t_2}(x,y) - s_{t_1,t_2}(x,(y_1,z_2))\\
 &\quad- s_{t_1,t_2}(x,(z_1,y_2)) + s_{t_1,t_2}(x,(z_1,z_2)))a(y_1,y_2) dy_1dy_2|^2 \frac{dt_1dt_2}{t_1t_2} \\
&\lesssim \int_{2|y_1-z_1|}^{|x_1-y_1|}\int_{2|y_2-z_2|}^{|x_2-y_2|}|\iint_{I\times J}\Phi^a_{t_1,t_2}(x,y,z) dy_1dy_2|^2 \frac{dt_1}{t_1} \frac{dt_2}{t_2}\\
&\lesssim l_I^{2\alpha}l_J^{2\beta}\int_{2|y_1-z_1|}^{|x_1-y_1|}\int_{2|y_2-z_2|}^{|x_2-y_2|}|\iint_{I\times J}\frac{|x_1 - y_1|^{\varepsilon}}{t_1^{\varepsilon}|x_1 - y_1|^{m+\beta}}\frac{|x_2 - y_2|^{\varepsilon}}{t_2^{\varepsilon}|x_2 - y_2|^{m+\beta}}\\&\quad\times|a(y_1,y_2)| dy_1dy_2|^2\frac{dt_1}{t_1}\frac{dt_2}{t_2}\\
&\lesssim l_I^{2\alpha-2\varepsilon}l_J^{2\beta-2\varepsilon}|x_1-z_1|^{-2n-2\alpha+2\varepsilon}|x_2-z_2|^{-2m-2\beta+2\varepsilon}.
\end{align*}
\quad The vanishing and size conditions of $H^1(\R^n\times\R^m)$ rectangle atom and size estimate of $s_{t_1,t_2}$ yield that
\begin{align*}
A_{4,2}&= \int_0^{\infty}\int_0^{\infty}|\iint_{I\times J} \mathbf{1}_{F_{4,2}}(t_1,t_2)s_{t_1,t_2}(x,y)a(y_1,y_2) dy_1dy_2|^2 \frac{dt_1t_2}{t_1t_2} \\
&\lesssim \int_{0}^{2|y_1-z_1|}\int_{0}^{2|y_2-z_2|}|\iint_{I\times J}\frac{t_1^{\alpha}}{(t_1 + |x_1 - y_1|)^{n+\alpha}} \frac{t_2^{\beta}}{(t_2 + |x_2 - y_2|)^{m+\beta}}\\&\quad \times |a(y_1,y_2)| dy_1dy_2|^2 \frac{dt_1}{t_1} \frac{dt_2}{t_2}\\
&\lesssim |x_1-z_1|^{-2n-2\alpha}|x_2-z_2|^{-2m-2\beta}\int_{2|y_1-z_1|}^{|x_1-y_1|}\int_{2|y_2-z_2|}^{|x_2-y_2|}t_1^{-1-2\alpha}t_2^{-1-2\beta}dt_1dt_2\\
&\lesssim l_I^{2\alpha}l_J^{2\beta}|x_1-z_1|^{-2n-2\alpha}|x_2-z_2|^{-2m-2\beta}.
\end{align*}
Now, we are in the position to estimate $A_{4,3}$.\par
Note that if $x\in E_1$ and $(t_1,t_2)\in F_{4,3}$, then for any $y_1\in I$, it holds that $|x_1-z_1|\geq t_1\geq 2|y_1-z_1|$. By the vanishing and size conditions of $H^1(\R^n\times\R^m)$ rectangle atom and H\"{o}lder conditions of $s_{t_1,t_2}$, we get
\begin{align*}
A_{4,3}&= \int_0^{\infty}\int_0^{\infty}|\iint_{I\times J} \mathbf{1}_{F_{4,3}}(t_1,t_2)(s_{t_1,t_2}(x,y)
- s_{t_1,t_2}(x,(z_1,y_2)))a(y_1,y_2) dy_1dy_2|^2\frac{dt_1t_2}{t_1t_2}\\
&\lesssim \int_{2|y_1-z_1|}^{|x_1-y_1|}\int_{0}^{2|y_2-z_2|}|\iint_{I\times J}\frac{|y_1 - z_1|^{\alpha}}{(t_1 + |x_1 - y_1|)^{n+\alpha}} \frac{t_2^{\beta}}{(t_2 + |x_2 - y_2|)^{m+\beta}}\\&\quad\times|a(y_1,y_2)| dy_1dy_2|^2 \frac{dt_1}{t_1} \frac{dt_2}{t_2}\\
&\lesssim l_I^{2\alpha}l_J^{2\beta}|x_2-z_2|^{-2n-2\beta}\int_{2|y_1-z_1|}^{|x_1-y_1|}|\iint_{I\times J}\frac{|x_1 - y_1|^{\varepsilon}}{t_1^{\varepsilon}|x_1 - y_1|^{n+\alpha}}|a(y_1,y_2)| dy_1dy_2|^2 \frac{dt_1}{t_1} \\
&\lesssim l_I^{2\alpha-2\varepsilon}l_J^{2\beta}|x_1-z_1|^{-2n-2\alpha+2\varepsilon}|x_2-z_2|^{-2m-2\beta}.
\end{align*}
By the fact that $A_{4,4}$ is symmetry with $A_{4,3}$, we may obtain
\[A_{4,4}\lesssim l_I^{2\alpha}l_J^{2\beta-2\varepsilon}|x_1-z_1|^{-2n-2\alpha}|x_2-z_2|^{-2m-2\beta+2\varepsilon}.\]
Therefore, we conclude that
\begin{align*}
|g(a)(x)|^2\leq \sum_{i=1}^4A_i&\lesssim l_I^{2\alpha-2\varepsilon}l_J^{2\beta-2\varepsilon}|x_1-z_1|^{-2n-2\alpha+2\varepsilon}|x_2-z_2|^{-2m-2\beta+2\varepsilon}\\
&\quad+l_I^{2\alpha}l_J^{2\beta}|x_1-z_1|^{-2n-2\alpha}|x_2-z_2|^{-2m-2\beta}\\
&\quad+l_I^{2\alpha-2\varepsilon}l_J^{2\beta}|x_1-z_1|^{-2n-2\alpha+2\varepsilon}|x_2-z_2|^{-2m-2\beta}\\
&\quad+l_I^{2\alpha}l_J^{2\beta-2\varepsilon}|x_1-z_1|^{-2n-2\alpha}|x_2-z_2|^{-2m-2\beta+2\varepsilon}.
\end{align*}
Recall that $\varepsilon<\min\{\alpha,\beta\}$ and $\gamma_1\sim\gamma$. The above estimate leads to
\begin{align*}
\iint_{E_1}|g(a)(x_1,x_2)|dx_1dx_2\lesssim\gamma^{-\min\{\alpha,\beta\}} .
\end{align*}
\noindent
 \bull
\textbf {Proof of (\ref{Re3}).}
Recall that $E_2=\{(x_1,x_2)\in\R^n\times\R^m: x_1\in\gamma_1 I,x_2\notin\gamma J\}$, we have
\begin{align*}
E_2&\subset\{(x_1,x_2)\in\R^n\times\R^m: x_1\in 2I,x_2\notin\gamma J\}\\
&\quad\cup\{(x_1,x_2)\in\R^n\times\R^m: x_1\in\gamma_1 I\backslash 2I,x_2\notin\gamma J\}=:E_{2,1}\cup E_{2,2}.
\end{align*}
Therefore, it holds that
\begin{align*}
\iint_{E_2}|g(a)(x_1,x_2)|dx_1dx_2\lesssim\iint_{E_{2,1}}|g(a)(x_1,x_2)|dx_1dx_2+\iint_{E_{2,2}}|g(a)(x_1,x_2)|dx_1dx_2 .
\end{align*}
Since $E_{2,2}\subset\{(x_1,x_2)\in\R^n\times\R^m: x_1\notin2I,x_2\notin\gamma J\}$, taking $\gamma_1=2$ and repeating the proof of (\ref{Re2}), we may get
\begin{align*}
\iint_{E_{2,2}}|g(a)(x_1,x_2)|dx_1dx_2\lesssim\gamma^{\min\{\alpha,\beta\}} .
\end{align*}
Hence, to complete the proof of Theorem \ref{Re}, we only need to show that there exists a $\delta>0$ such that
\begin{align}\label{R5}
\iint_{E_{2,1}}|g(a)(x_1,x_2)|dx_1dx_2\lesssim\gamma^{-\delta} .
\end{align}
To do this, similar to the analysis in the proof of (\ref{Re2}), we split the domain of variable $t_1$ and $t_2$ as follows,
\begin{align*}
(t_1,t_2)\in \R^+\times\R^+\subset&\big((0,2l_I]\times[|x_2-z_2|,\infty)\big)\cup\big((0,2l_I]\times(2|y_2-z_2|,|x_2-z_2|)\big)\\
&\cup\big((0,2l_I]\times[0,2|y_2-z_2|)\big)\cup\big((2l_I,\infty)\times[|x_2-z_2|,\infty)\big)\\
&\cup\big((2l_I,\infty)\times(2|y_2-z_2|,|x_2-z_2|)\big)\cup\big((2l_I,\infty)\times[0,2|y_2-z_2|)\big)\\
&=:G_1\cup G_2\cup G_3\cup G_4\cup G_5\cup G_6.
\end{align*}
Thus, we may obtain
\begin{align*}
\iint_{E_{2,1}}|g(a)(x_1,x_2)|dx_1dx_2
&\leq \sum_{i=1}^{6}\iint_{E_2}\big(\int_0^{\infty}\int_0^{\infty}|\iint_{I\times J} \mathbf{1}_{G_i}(t_1,t_2)s_{t_1,t_2}(x_1,x_2,y_1,y_2)\\&\quad\times a(y_1,y_2) dy_1 \ dy_2|^2 \frac{dt_1}{t_1} \frac{dt_2}{t_2}\big)^{1/2}dx_1dx_2=:\sum_{i=1}^6B_i.
\end{align*}
Now, let us begin with the estimate of $B_1$.\\
\noindent
\textbf {Estimate for $B_1$.} Since $x\in E_2$ and $(t_1,t_2)\in G_1$, then for any $y_2\in J$, it satisfies that $t_2\geq 2|y_2-z_2|$. The support, vanishing and size condition of $H^1(\R^n\times\R^m)$ rectangle atom, H\"{o}lder inequality and the combinations of Carleson and H\"{o}lder conditions of $s_{t_1,t_2}$ imply that
\begin{align}\label{B-1}
B_1&= \iint_{2I\times(\gamma J)^c}\bigg(\int_0^{\infty}\int_0^{\infty}|\iint_{2I\times J} \mathbf{1}_{G_1}(t_1,t_2)(s_{t_1,t_2}(x,y)
- s_{t_1,t_2}(x,(y_1,z_2)))\\ \notag
&\quad\times a(y_1,y_2) dy_1dy_2|^2 \frac{dt_1}{t_1} \frac{dt_2}{t_2}\bigg)^{1/2}dx_1dx_2\\ \notag
&\lesssim \norm{a}_{L^2}\iint_{2I\times(\gamma J)^c}\bigg(\int_0^{2l_I}\int_{|x_2-z_2|}^{\infty}\iint_{2I\times J} |s_{t_1,t_2}(x,y)
- s_{t_1,t_2}(x,(y_1,z_2))|^2\\ \notag
&\quad\times dy_1dy_2 \frac{dt_1}{t_1} \frac{dt_2}{t_2}\bigg)^{1/2}dx_1dx_2\\ \notag
&\lesssim \norm{a}_{L^2}(2I)^{1/2}\int_{(\gamma J)^c}\bigg(\int_{|x_2-z_2|}^{\infty}\int_J\big(\iint_{\widehat{2I}}\int_{2I} |s_{t_1,t_2}(x,y)
- s_{t_1,t_2}(x,(y_1,z_2))|^2\\ \notag
&\quad\times dy_1 \frac{dx_1 dt_1}{t_1}\big)dy_2\frac{dt_2}{t_2}\bigg)^{1/2}dx_2\\ \notag
&\lesssim \norm{a}_{L^2}I^{1/2}\int_{(\gamma J)^c}\bigg(\int_{|x_2-z_2|}^{\infty}\int_J\frac{|y_2 - z_2|^{2\beta}}{(t_2 + |x_2 - y_2|)^{2m + 2\beta}}dy_2\frac{dt_2}{t_2}\bigg)^{1/2}dx_2\\ \notag
&\lesssim \norm{a}_{L^2}I^{1/2}J^{1/2}l_J^{\beta}\int_{(\gamma J)^c}|x_2-z_2|^{-m-\beta}dx_2\lesssim\gamma^{-\beta}. \notag
\end{align}
Now, we consider the estimate of $B_{2}$.\par
\noindent
\textbf {Estimate for $B_2$.} If $x\in E_2$ and $(t_1,t_2)\in G_1$, then for any $y_2\in J$, it holds that $|x_2-z_2|\geq t_2\geq 2|y_2-z_2|$. Recall $\varepsilon\leq\min\{\alpha,\beta\}$. Similar to inequality (\ref{B-1}), by the support, vanishing and size condition of $H^1(\R^n\times\R^m)$ rectangle atom, H\"{o}lder inequality and the combinations of Carleson and H\"{o}lder conditions of $s_{t_1,t_2}$, we may obtain
\begin{align*}
B_2&= \iint_{2I\times(\gamma J)^c}\bigg(\int_0^{\infty}\int_0^{\infty}|\iint_{2I\times J} \mathbf{1}_{G_2}(t_1,t_2)(s_{t_1,t_2}(x,y)
- s_{t_1,t_2}(x,(y_1,z_2)))\\
&\quad\quad \times a(y_1,y_2) dy_1dy_2|^2 \frac{dt_1}{t_1} \frac{dt_2}{t_2}\bigg)^{1/2}dx_1dx_2\\
&\lesssim \norm{a}_{L^2}I^{1/2}\int_{(\gamma J)^c}\bigg(\int_{2|y_2-z_2|}^{|x_2-z_2|}\int_J\frac{|y_2 - z_2|^{2\beta}}{(t_2 + |x_2 - y_2|)^{2m + 2\beta}}dy_2\frac{dt_2}{t_2}\bigg)^{1/2}dx_2\\
&\lesssim \norm{a}_{L^2}I^{1/2}l_J^{\beta}\int_{(\gamma J)^c}\bigg(\int_{2|y_2-z_2|}^{|x_2-z_2|}\int_J\frac{|x_2 - z_2|^{\varepsilon}}{t_2^{\varepsilon}|x_2 - z_2|^{2m + 2\beta}}dy_2\frac{dt_2}{t_2}\bigg)^{1/2}dx_2\\
&\lesssim \norm{a}_{L^2}I^{1/2}J^{1/2}l_J^{\beta-\varepsilon/2}\int_{(\gamma J)^c}|x_2-z_2|^{-m-\beta+\varepsilon/2}dx_2\lesssim\gamma^{-\beta/2}.
\end{align*}
\noindent
\textbf {Estimate for $B_3$.} The support, vanishing and size condition of $H^1(\R^n\times\R^m)$ rectangle atom, H\"{o}lder inequality and the Combinations of Carleson and size conditions of $s_{t_1,t_2}$ yield that
\begin{align*}
B_3&= \iint_{2I\times(\gamma J)^c}\bigg(\int_0^{\infty}\int_0^{\infty}|\iint_{2I\times J} \mathbf{1}_{G_3}(t_1,t_2)s_{t_1,t_2}(x,y)\\&
\quad\quad \times(y_1,y_2) dy_1dy_2|^2 \frac{dt_1dt_2}{t_1t_2} \bigg)^{1/2}dx_1dx_2\\
&\lesssim \norm{a}_{L^2}(2I)^{1/2}\int_{(\gamma J)^c}\bigg(\int_{0}^{2|y_2-z_2|}\int_J\big(\iint_{\widehat{2I}}\int_{2I}|s_{t_1,t_2}(x_1,x_2,y_1,y_2)|dy_1\frac{dx_1dt_1}{t_1}\big)\\&\quad\quad \times dy_2\frac{dt_2}{t_2}\bigg)^{1/2}dx_2\\
&\lesssim \norm{a}_{L^2}I^{1/2}\int_{(\gamma J)^c}\bigg(\int_{0}^{2|y_2-z_2|}\int_J\frac{t_2^{2\beta}}{(t_2 + |x_2 - y_2|)^{2m + 2\beta}}dy_2\frac{dt_2}{t_2}\bigg)^{1/2}dx_2\\
&\lesssim \norm{a}_{L^2}I^{1/2}J^{1/2}l_J^{\beta}\int_{(\gamma J)^c}|x_2-z_2|^{-m-\beta}dx_2\lesssim\gamma^{-\beta}.
\end{align*}
\noindent
\textbf {Estimate for $B_4$.} Since $x\in E_2$ and $(t_1,t_2)\in G_4$, then for any $y_1\in I$, it satisfies that $t_1\geq 2|y_1-z_1|$ and $t_1>|x_1-y_1|$. For any $y_2\in J$, it satisfies that $t_2\geq|x_2-z_2|\geq 2|y_2-z_2|$. The vanishing and size condition of $H^1(\R^n\times\R^m)$ rectangle atom and H\"{o}lder estimate of $s_{t_1,t_2}$ yield that
\begin{align}\label{B4}
&\int_0^{\infty}\int_0^{\infty}|\iint_{I\times J} \mathbf{1}_{G_4}(t_1,t_2)s_{t_1,t_2}(x,y))a(y_1,y_2) dy_1 \ dy_2|^2 \frac{dt_1}{t_1} \frac{dt_2}{t_2}\\ \notag
&=\int_0^{\infty}\int_0^{\infty}|\iint_{I\times J} \mathbf{1}_{G_4}(t_1,t_2)(s_{t_1,t_2}(x,y) - s_{t_1,t_2}(x,(y_1,z_2))\\ \notag
 &\quad- s_{t_1,t_2}(x,(z_1,y_2)) + s_{t_1,t_2}(x,(z_1,z_2)))a(y_1,y_2) dy_1 \ dy_2|^2 \frac{dt_1}{t_1} \frac{dt_2}{t_2}\end{align}\begin{align*}
{}&\lesssim \int_{2l_I}^{\infty}\int_{|x_2-z_2|}^{\infty}|\iint_{I\times J}\frac{|y_1 - z_1|^{\alpha}}{(t_1 + |x_1 - y_1|)^{n+\alpha}} \frac{|y_2 - z_2|^{\beta}}{(t_2 + |x_2 - y_2|)^{m+\beta}}\\ \notag &\quad \times |a(y_1,y_2)| dy_1dy_2|^2 \frac{dt_1}{t_1} \frac{dt_2}{t_2}\\ \notag
&\lesssim l_I^{2\alpha}l_J^{2\beta}\int_{2l_I}^{\infty}\int_{|x_2-z_2|}^{\infty}\frac{dt_1}{t_1^{2n+2\alpha+1}} \frac{dt_2}{t_2^{2n+2\beta+1}}
|\iint_{I\times J}|a(y_1,y_2)| dy_1dy_2|^2\\ \notag
&\lesssim l_I^{-2n}l_J^{2\beta}|x_2-z_2|^{-2m-2\beta}. \notag
\end{align*}
Putting the above estimates into the definition of $B_4$, it holds that
\begin{align*}
B_4\lesssim l_I^{-n}l_J^{\beta}\int_{2I}\int_{(\gamma J)^c}|x_2-z_2|^{-m-\beta}dx_1dx_2\lesssim\gamma^{-\beta}.
\end{align*}\\
\noindent
\textbf {Estimate for $B_5$.} Note that $x\in E_2$ and $(t_1,t_2)\in G_5$, then for any $y_1\in I$, it satisfies that $t_1\geq 2|y_1-z_1|$ and $t_1>|x_1-y_1|$. For any $y_2\in J$, it satisfies $|x_2-z_2|\geq t_2\geq 2|y_2-z_2|$. Similar to inequality (\ref{B4}), by the vanishing and size condition of $H^1(\R^n\times\R^m)$ rectangle atom and H\"{o}lder estimate of $s_{t_1,t_2}$, we may get
\begin{align*}
&\int_0^{\infty}\int_0^{\infty}|\iint_{I\times J} \mathbf{1}_{G_5}(t_1,t_2)s_{t_1,t_2}(x,y))a(y_1,y_2) dy_1 \ dy_2|^2 \frac{dt_1}{t_1} \frac{dt_2}{t_2} \\
&\lesssim \int_{2l_I}^{\infty}\int_{2|y_2-z_2|}^{|x_2-z_2|}|\iint_{I\times J}\frac{|y_1 - z_1|^{\alpha}}{(t_1 + |x_1 - y_1|)^{n+\alpha}} \frac{|y_2 - z_2|^{\beta}}{(t_2 + |x_2 - y_2|)^{m+\beta}}|a(y_1,y_2)| dy_1dy_2|^2 \frac{dt_1}{t_1} \frac{dt_2}{t_2}\\
&\lesssim l_I^{2\alpha}l_J^{2\beta}\big(\int_{2l_I}^{\infty}\frac{dt_1}{t_1^{2n+2\alpha+1}}\big)
\int_{2|y_2-z_2|}^{|x_2-y_2|}|\iint_{I\times J}\frac{|x_2 - y_2|^{\varepsilon/2}}{t_2^{\varepsilon/2}|x_2 - y_2|^{m+\beta}}|a(y_1,y_2)| dy_1dy_2|^2\frac{dt_2}{t_2}\\
&\lesssim l_I^{-2n}l_J^{2\beta-\varepsilon}|x_2-z_2|^{-2m-2\beta+\varepsilon}.
\end{align*}
This leads to
\begin{align*}
B_5\lesssim l_I^{-n}l_J^{\beta-\varepsilon/2}\int_{2I}\int_{(\gamma J)^c}|x_2-z_2|^{-m-\beta+\varepsilon/2}dx_1dx_2\lesssim\gamma^{-\beta/2}.
\end{align*}
\noindent
\textbf {Estimate for $B_6$.} Note that $x\in E_2$ and $(t_1,t_2)\in G_6$, then for any $y_1\in I$, it holds that $t_1\geq 2|y_1-z_1|$ and $t_1>|x_1-y_1|$. By the support, vanishing and size condition of $H^1(\R^n\times\R^m)$ rectangle atom and H\"{o}lder estimate of $s_{t_1,t_2}$, we may get
\begin{align*}
&\int_0^{\infty}\int_0^{\infty}|\iint_{I\times J} \mathbf{1}_{G_6}(t_1,t_2)s_{t_1,t_2}(x,y))a(y_1,y_2) dy_1 \ dy_2|^2 \frac{dt_1}{t_1} \frac{dt_2}{t_2}\\
&=\int_0^{\infty}\int_0^{\infty}|\iint_{I\times J} \mathbf{1}_{F_{2,1}}(t_1,t_2)(s_{t_1,t_2}(x,y)
- s_{t_1,t_2}(x,(z_1,y_2)))a(y_1,y_2) dy_1 \ dy_2|^2  \frac{dt_1}{t_1} \frac{dt_2}{t_2}\\
&\lesssim \int_{2l_I}^{\infty}\int_{0}^{2|y_2-z_2|}|\iint_{I\times J}\frac{|y_1 - z_1|^{\alpha}}{(t_1 + |x_1 - y_1|)^{n+\alpha}} \frac{t_2^{\beta}}{(t_2 + |x_2 - y_2|)^{m+\beta}}|a(y_1,y_2)| dy_1dy_2|^2  \frac{dt_1}{t_1} \frac{dt_2}{t_2}\\
&\lesssim l_I^{2\alpha}|x_2-z_2|^{-2m-2\beta}\int_{2l_I}^{\infty}\int_{0}^{2|y_2-z_2|}\frac{t_2^{\beta}}{t_1^{2n+2\alpha+1}} dt_1dt_2
|\iint_{I\times J}|a(y_1,y_2)| dy_1dy_2|^2\\
&\lesssim l_I^{2n}l_J^{2\beta}|x_2-z_2|^{-2m-2\beta}.
\end{align*}
This yields that
\begin{align*}
B_6\lesssim l_I^{n}l_J^{\beta}\int_{2I}\int_{(\gamma J)^c}|x_2-z_2|^{-m-\beta}dx_1dx_2\lesssim\gamma^{-\beta}.
\end{align*}
Combining the estimate for $B_i$($i=1\cdot\cdot\cdot 6$), we get
\begin{align*}
&\iint_{E_{2,1}}|g(a)(x_1,x_2)|dx_1dx_2\lesssim \gamma^{-\beta/2}.
\end{align*}
Thus, we finish the proof of the inequality (\ref{R5}). Since we have reduced the proof of Theorem $\ref{thm1}$ to inequality (\ref{R5}), the proof of Theorem $\ref{thm1}$ is also completed.
\end{proof}
\section{Proof of Theorem 1.2}\label{Sec-300}
First, we list an elementary inequality which is useful in the proof of Theorem \ref{thm2}.
\begin{lemma}\label{lemma2}$($\cite{XY}$)$
Let $\theta_1, \theta_2>0$, $A\leq B$. Denote $\theta_0=\min\{\theta_1,\theta_2\}$, $\theta_3=\max\{\theta_1,\theta_2\}>n$, then
\begin{align*}
\int_{\R^n}\frac{dy}{(A+|y|)^{\theta_1}(B+|x-y|)^{\theta_2}}\lesssim A^{n-\theta_3}(|x|+B)^{-\theta_0}.
\end{align*}
\end{lemma}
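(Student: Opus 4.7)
The plan is to prove the bound by decomposing $\R^n$ into regions in which, on each piece, one of the two factors $(A+|y|)^{\theta_1}$ or $(B+|x-y|)^{\theta_2}$ is effectively a single scale (one of $A$, $B$, $|x|$, or $R:=|x|+B$), reducing the calculation to an elementary polar power integral. I would begin by splitting into two main cases based on the size of $|x|$ relative to $B$: the case $|x|\geq B$ (so that $R\sim|x|$) and the case $|x|<B$ (so that $R\sim B$). Throughout, the three working tools are polar coordinates, the monotonicity $A\leq B$, and the convergence of tails guaranteed by $\theta_3>n$.

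In the first case, $|x|\geq B$, I would use the three standard regions $\Omega_1=\{|y|\leq|x|/2\}$, $\Omega_2=\{|x-y|\leq|x|/2\}$, and $\Omega_3=\{|y|>|x|/2,\ |x-y|>|x|/2\}$. On $\Omega_1$ one has $B+|x-y|\gtrsim R$, so the integral is at most $R^{-\theta_2}\int_{|y|\leq|x|/2}(A+|y|)^{-\theta_1}\,dy$, and polar coordinates give $\sim A^{n-\theta_1}$ when $\theta_1>n$ and $\sim|x|^{n-\theta_1}$ when $\theta_1<n$; in either subcase the product collapses to $\lesssim A^{n-\theta_3}R^{-\theta_0}$ after using $A\leq R$ and $\theta_3>n$. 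Region $\Omega_2$ is handled by the change of variables $z=x-y$ combined with the inequality $A+|y|\geq A+|x|/2\sim R$, which itself uses $A\leq B\leq|x|$. On $\Omega_3$ both denominators are $\gtrsim R$, so after a further split at $|y|\sim 4R$, whose tail integrates thanks to $\theta_1+\theta_2>n$ (a consequence of $\theta_3>n$ and $\theta_0>0$), the contribution is $\lesssim R^{n-\theta_1-\theta_2}$, absorbed into the target via $(A/R)^{\theta_3-n}\leq 1$.

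In the second case, $|x|<B$, one has $|x-y|\leq 3B$ whenever $|y|\leq 2B$, so a simpler annular decomposition in $|y|$ alone suffices: $\{|y|\leq A\}$, $\{A<|y|\leq 2B\}$, and $\{|y|>2B\}$. On the first two annuli $B+|x-y|\sim B$ and the integrals in $y$ reduce to $\int(A+|y|)^{-\theta_1}\,dy$ over a ball of radius $2B$, which is elementary; on the third, $|x-y|\sim|y|$ and the tail converges by $\theta_1+\theta_2>n$. Each resulting contribution is compared to $A^{n-\theta_3}B^{-\theta_0}$ using the two absorbing inequalities $(A/B)^{\theta_3-n}\leq 1$ and $(A/B)^{\theta_3-\theta_0}\leq 1$.

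The main obstacle is not any single estimate but the bookkeeping: one must verify, across every combination of (i) which of $\theta_1,\theta_2$ equals $\theta_3$, (ii) whether the smaller of the two is above or below $n$, and (iii) the relative sizes of $|x|$, $A$, $B$, that the exponent algebra collapses to the target $A^{n-\theta_3}(|x|+B)^{-\theta_0}$. The asymmetric hypothesis $A\leq B$ enters precisely when one needs to convert a negative power of $B$ or $R$ into the corresponding power of $A$, while the single hypothesis $\theta_3>n$ is used every time one must control a tail or absorb a factor of the form $(A/R)^{\theta_3-n}$.
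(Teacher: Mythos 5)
The paper simply cites \cite{XY} for this lemma and gives no proof of its own, so there is nothing to compare against; I am evaluating your argument on its own merits. Your decomposition scheme is correct and complete. In Case~1 ($|x|\ge B$, so $R:=|x|+B\sim|x|$) the regions $\Omega_1,\Omega_2,\Omega_3$ do exactly what you claim: on $\Omega_1$ the radial integral $\int_{|y|\le|x|/2}(A+|y|)^{-\theta_1}dy$ yields $A^{n-\theta_1}$ or $R^{n-\theta_1}$ and either way the product is absorbed into $A^{n-\theta_3}R^{-\theta_0}$ using $A\le R$, $\theta_3>n$, and (when $\theta_1=\theta_0$) $\theta_0\le\theta_3$; the $\Omega_2$ case is symmetric after $z=x-y$, with the extra step of converting $B^{n-\theta_3}$ to $A^{n-\theta_3}$ via $A\le B$; and on $\Omega_3$ both denominators are comparable to $R$ or to $|y|$, and the tail converges because $\theta_1+\theta_2=\theta_0+\theta_3>n$, producing $R^{n-\theta_0-\theta_3}\le A^{n-\theta_3}R^{-\theta_0}$. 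Case~2 ($|x|<B$, so $R\sim B$) with your annular split $\{|y|\le A\}\cup\{A<|y|\le2B\}\cup\{|y|>2B\}$ is likewise correct, and the two absorbing inequalities $(A/B)^{\theta_3-n}\le1$ and $(A/B)^{\theta_3-\theta_0}\le1$ are exactly what is needed.

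The only thing you gloss over is the boundary case $\theta_0=n$ (e.g.\ $\theta_1=n<\theta_3=\theta_2$): there your polar-coordinate integrals on the intermediate annuli produce a logarithm, $\log(R/A)$ or $\log(B/A)$, rather than a clean power. This is not a real gap — the same absorbing factor $(A/R)^{\theta_3-n}$ (or $(A/B)^{\theta_3-n}$) that you already invoke swallows the log, since $\log t\lesssim t^{\theta_3-n}$ for $t\ge1$ — but you should state the subcase explicitly rather than presenting only a dichotomy between $\theta_1>n$ and $\theta_1<n$. With that addendum the proof is complete.
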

\begin{proof}[Proof of Theorem~\ref{thm2}]
Similar to Theorem \ref{thm1}, in order to prove (\ref{Reg}), it suffices to verify that for any $H^1(\R^n\times\R^m)$-atom $\widetilde{a}$, there exists a constant $C>0$ satisfying
\begin{align}\label{Inq1}
\norm{g_{\vec{\lambda}}^*(\widetilde{a})}_{L^1(\R^n\times\R^m)}=\norm{T_2(\widetilde{a})}_{L^1_{\mathcal{H}_2}(\R^n\times\R^m)}\leq C.
 \end{align}
 In fact, let $f\in H^1(\R^n\times\R^m)$, by atomic decomposition, we have $f=\sum_j^{\infty}\lambda_j\widetilde{a}_j$, where $\widetilde{a}_j$ is $H^1(\R^n\times\R^m)$-atom. The size condition of $K_{t_1,t_2}$ and Fubini theorem lead to
 \begin{align}\label{in1}
 &\sum_{j=1}^{\infty}\lambda_j\iint_{\R^n\times\R^m}T_2(\widetilde{a}_j)(x_1, x_2)dx_1dx_2\\ \notag
 &\leq\sum_{j=1}^{\infty}\lambda_j\frac{1}{t_1^nt_2^m}\iint_{\R^{n}\times\R^m} \iint_{\R^{n}\times\R^m}
\Big(\frac{t_1}{t_1 + |x_1 - y_1|}\Big)^{\frac{n \lambda_1}{2}}\Big(\frac{t_2}{t_2 + |x_2 - y_2|}\Big)^{\frac{m \lambda_2}{2}} \\ \notag
&\quad\quad \times \iint_{\R^{n}\times\R^m}| K_{t_1,t_2}(y_1,y_2,z_1,z_2)| dy_1 dy_2 dx_1 dx_2\widetilde{a}(z_1,z_2) dz_1dz_2\\ \notag
 &\leq\sum_{j=1}^{\infty}\frac{\lambda_j}{t_1^nt_2^m}\iint_{\R^{n}\times\R^m} \iint_{\R^{n}\times\R^m}
\Big(\frac{t_1}{t_1 + |x_1 - y_1|}\Big)^{\frac{n \lambda_1}{2}}\Big(\frac{t_2}{t_2 + |x_2 - y_2|}\Big)^{\frac{m \lambda_2}{2}}\iint_{\R^{n}\times\R^m} \\ \notag
&\quad\quad \times \frac{t_1^{\alpha}}{(t_1 + |y_1 - z_1|)^{n+\alpha}} \frac{t_2^{\beta}}{(t_2 + |y_2 - z_2|)^{m+\beta}}dy_1 dy_2 dx_1 dx_2|\widetilde{a}(z_1,z_2)| dz_1dz_2.
 \end{align}
Since $0 < \alpha \leq n(\lambda_1 -2)/2$ and $0 < \beta \leq m(\lambda_2 -2)/2$, we may set $\varepsilon_1>0$, $\varepsilon_2>0$ with $2n+2\alpha\leq 2n+2\varepsilon_1\leq n\lambda_1$ and $2m+2\beta\leq 2m+2\varepsilon_2\leq m\lambda_2$, respectively. By Lemma \ref{lemma2}, we obtain
\begin{align}\label{in2}
&\int_{\R^n}\int_{\R^{n}}\Big(\frac{t_1}{t_1 + |x_1 - y_1|}\Big)^{\frac{n \lambda_1}{2}} \frac{t_1^{\alpha}}{(t_1 + |y_1 - z_1|)^{n+\alpha}}  dy_1 dx_1\\ \notag
&\lesssim \int_{\R^n}\int_{\R^{n}} \frac{t_1^{n+\varepsilon_1}}{\big(t_1 + |x_1 - y_1|\big)^{n+\varepsilon_1}} \frac{t_1^{\alpha}}{(t_1 + |y_1 - z_1|)^{n+\alpha}}  dy_1 dx_1\\ \notag
&\lesssim \int_{\R^n} \frac{t_1^{n+\alpha}}{(t_1 + |x_1 - z_1|)^{n+\alpha}}   dx_1\lesssim t_1^n.
\end{align}
Similarly, we have
\begin{align}\label{in3}
\int_{\R^m}\int_{\R^{m}}\Big(\frac{t_2}{t_2 + |x_2 - y_2|}\Big)^{\frac{m \lambda_2}{2}} \frac{t_2^{\beta}}{(t_2 + |y_2 - z_2|)^{m+\beta}}  dy_2 dx_2\lesssim t_2^m.
\end{align}
Putting (\ref{in2}) and (\ref{in3}) into (\ref{in1}), we have
 \begin{align}\label{in1}
 &\sum_{j=1}^{\infty}\lambda_j\iint_{\R^n\times\R^m}T_2(\widetilde{a}_j)(x_1, x_2)dx_1dx_2\leq C.
 \end{align}
 Hence, (\ref{in1}) and Lebesgue dominated convergence theorem imply that
 \begin{equation*}
 T_2(f)=\sum_{j=1}^{\infty}\lambda_jT_2(\widetilde{a}_j),\ \ a. e.
 \end{equation*}
 By (\ref{Inq1}) and repeating the same steps as in (\ref{iin9}), we have
 \begin{align*}
 \norm{T_2(f)}_{L^1_{\mathcal{H}_2}(\R^n\times\R^m)}\lesssim\norm{f}_{H^1(\R^n\times\R^m)}.
 \end{align*}
 Thus, to prove Theorem \ref{thm2}, we only need to prove inequality (\ref{Inq1}).\par
From now on, we are devoted to prove (\ref{Inq1}).\par
Let $a$ be any $H^1(\R^n\times\R^m)$ rectangle atom, with support on a rectangle $R=I\times J$. As remarked before, $g_{\vec{\lambda}}^*(f)$ is bounded from $L^2(\R^n\times\R^m)$ to $L^2_{\mathcal{H}_2}(\R^n\times\R^m)$. Thus, by Lemma \ref{lemma}, in order to prove $(\ref{Reg})$, it suffices to verify that 
\begin{align}\label{Re1g}
\iint_{\R^n\times\R^m\setminus\widetilde{R}_{\gamma}}|g_{\vec{\lambda}}^*(a)(x_1,x_2)|dx_1dx_2\lesssim\gamma^{-\delta}, \text {\ \ \quad for all $\gamma\geq 2$}.
\end{align}
Recall the definition of $E_i$ in the proof of Theorem \ref{thm1} and by symmetry, to prove (\ref{Re1g}), we only need to show that there exist a $\delta>0$ such that
\begin{align}\label{Re2g}
\iint_{E_1}|g_{\vec{\lambda}}^*(a)(x_1,x_2)|dx_1dx_2\lesssim\gamma^{-\delta} \text {\ \ for all $\gamma\geq 2$}
\end{align}
and
\begin{align}\label{Re3g}
\iint_{E_2}|g_{\vec{\lambda}}^*(a)(x_1,x_2)|dx_1dx_2\lesssim\gamma^{-\delta} \text {\ \ for all $\gamma\geq 2$}.
\end{align}

Now, let us to begin with the proof of (\ref{Re2g}).\\
\noindent
 \bull
\textbf {Proof of (\ref{Re2g}).}
By the definition of $g_{\vec{\lambda}}^*$ and the support condition of $H^1(\R^n\times\R^m)$ rectangle atom, we get
\begin{align*}
|g_{\vec{\lambda}}^*(a)(x)|^2
&= \iint_{\R^{m+1}_{+}} \Big(\frac{t_2}{t_2 + |x_2 - y_2|}\Big)^{m \lambda_2}
\iint_{\R^{n+1}_{+}} \Big(\frac{t_1}{t_1 + |x_1 - y_1|}\Big)^{n \lambda_1} \\
&\quad\quad \times |\iint_{\R^{n}\times\R^m} K_{t_1,t_2}(y_1,y_2,z_1,z_2)a(z_1,z_2) dz_1 \ dz_2|^2 \frac{dy_1 dt_1}{t_1^{n+1}} \frac{dy_2 dt_2}{t_2^{m+1}}.
\end{align*}With abuse of notation, we split the domain of variable $t_1$ and $t_2$ as follows,
\begin{align*}
(t_1,t_2)\in \R^+\times\R^+\subset&\big([2l_I,\infty)\times[2l_J,\infty)\big)\cup\big([2l_I,\infty)\times(0,2l_J)\big)\\
&\cup\big((0,2l_I)\times[2l_J,\infty)\big)\cup\big((0,2l_I)\times(0,2l_J)\big)\\
&=:F_1\cup F_2\cup F_3\cup F_4.
\end{align*}
Hence, for any $x\in E_1$, we have
\begin{align*}
|g_{\vec{\lambda}}^*(a)(x)|^2&\leq \sum_{i=1}^{4}
\iint_{\R^{m+1}_{+}} \Big(\frac{t_2}{t_2 + |x_2 - y_2|}\Big)^{m \lambda_2}
\iint_{\R^{n+1}_{+}} \Big(\frac{t_1}{t_1 + |x_1 - y_1|}\Big)^{n \lambda_1} \mathbf{1}_{F_i}(t_1,t_2)\\
&\quad\quad  |\iint_{\R^{n}\times\R^m} K_{t_1,t_2}(y_1,y_2,z_1,z_2)a(z_1,z_2) dz_1 \ dz_2|^2 \frac{dy_1 dt_1}{t_1^{n+1}} \frac{dy_2 dt_2}{t_2^{m+1}}\\
&=:\sum_{i=1}^4A_i.
\end{align*}
\quad Denote $z_1'$ as the centre of $I$ and $z_2'$ as the centre of $J$. Now, we will estimate each $A_i$.\par
\noindent
\textbf {Estimate for $A_1$.} Since $x\in E_1$ and $(t_1,t_2)\in F_1$, then for any $y_1\in I$, $y_2\in J$, we have $t_1\geq 2l_I\geq2|z_1-z_1'|$ and $t_2\geq 2l_J\geq2|z_2-z_2'|$. By the vanishing, support and size condition of $H^1(\R^n\times\R^m)$ rectangle atom, H\"{o}lder estimate of $K_{t_1,t_2}$ and H\"{o}lder inequality, we have
\begin{align}\label{A1}
A_1&=\iint_{\R^{m+1}_{+}} \Big(\frac{t_2}{t_2 + |x_2 - y_2|}\Big)^{m \lambda_2}
\iint_{\R^{n+1}_{+}} \Big(\frac{t_1}{t_1 + |x_1 - y_1|}\Big)^{n \lambda_1}\mathbf{1}_{F_1}(t_1,t_2) \\ \notag
&\quad\quad  \times|\iint_{I\times J} (K_{t_1,t_2}(y,z) - K_{t_1,t_2}(y,(z_1,z_2'))\\ \notag
 &\quad- K_{t_1,t_2}(y,(z_1',z_2)) + K_{t_1,t_2}(y,(z_1',z_2')))a(z_1,z_2) dz_1 dz_2|^2 \frac{dy_1 dt_1}{t_1^{n+1}} \frac{dy_2 dt_2}{t_2^{m+1}}\\ \notag
&\lesssim \int_{2l_J}^{\infty}\int_{\R^{m}} \Big(\frac{t_2}{t_2 + |x_2 - y_2|}\Big)^{m \lambda_2}
\int_{2l_I}^{\infty}\int_{\R^{n}} \Big(\frac{t_1}{t_1 + |x_1 - y_1|}\Big)^{n \lambda_1} |\iint_{I\times J}\\ \notag
&\quad\quad \frac{|z_1 - z_1'|^{\alpha}}{(t_1 + |y_1 - z_1|)^{n+\alpha}} \frac{|z_2 - z_2'|^{\beta}}{(t_2 + |y_2 - z_2|)^{m+\beta}} a(z_1,z_2) dz_1 \ dz_2|^2 \frac{dy_1 dt_1}{t_1^{n+1}} \frac{dy_2 dt_2}{t_2^{m+1}}\\ \notag
&\lesssim
\norm{a}_2^2\int_{2l_I}^{\infty}\int_{\R^{n}}\int_{I} \Big(\frac{t_1}{t_1 + |x_1 - y_1|}\Big)^{n \lambda_1} \frac{|z_1 - z_1'|^{2\alpha}}{(t_1 + |y_1 - z_1|)^{2n+2\alpha}}  dz_1  \frac{dy_1 dt_1}{t_1^{n+1}} \\ \notag
&\quad\times \int_{2l_J}^{\infty}\int_{\R^{m}}\int_{J} \Big(\frac{t_2}{t_2 + |x_2 - y_2|}\Big)^{m \lambda_2} \frac{|z_2 - z_2'|^{2\beta}}{(t_2 + |y_2 - z_2|)^{2m+2\beta}}  dz_2  \frac{dy_2 dt_2}{t_2^{m+1}}.
\end{align}
By Lemma \ref{lemma2}, it holds that
\begin{align}\label{Ak}
&\int_{2l_I}^{\infty}\int_{\R^{n}}\int_{I} \Big(\frac{t_1}{t_1 + |x_1 - y_1|}\Big)^{n \lambda_1} \frac{|z_1 - z_1'|^{2\alpha}}{(t_1 + |y_1 - z_1|)^{2n+2\alpha}}  dz_1  \frac{dy_1 dt_1}{t_1^{n+1}}\\ \notag
&\lesssim l_I^{2\alpha}\int_{2l_I}^{\infty}\int_{I}\int_{\R^{n}} \frac{t_1^{2n+2\varepsilon_1}}{\big(t_1 + |x_1 - y_1|\big)^{2n+2\varepsilon_1}} \frac{1}{(t_1 + |y_1 - z_1|)^{2n+2\alpha}}  dy_1  \frac{dz_1 dt_1}{t_1^{n+1}}\\ \notag
&\lesssim l_I^{2\alpha}\int_{2l_I}^{\infty}\int_{I}\frac{t_1^n}{\big(t_1 + |x_1 - z_1|\big)^{2n+2\alpha}} \frac{dz_1 dt_1}{t_1^{n+1}}
\lesssim l_I^{n+2\alpha}|x_1-z_1|^{-2n-2\alpha}.
\end{align}
Similarly, we may get
\begin{align}\label{Akk}
\int_{2l_J}^{\infty}\int_{\R^{m}}\int_{J} \Big(\frac{t_2}{t_2 + |x_2 - y_2|}\Big)^{m \lambda_2} &\frac{|z_2 - z_2'|^{2\beta}}{(t_2 + |y_2 - z_2|)^{2n+2\beta}}  dz_2  \frac{dy_2 dt_2}{t_2^{m+1}}\\ \notag
&\lesssim l_J^{m+2\beta}|x_2-z_2|^{-2m-2\beta}.
\end{align}
Putting the above estimate into (\ref{A1}) and recall that $\norm{a}_2^2\lesssim |R|^{-1}=l_I^{-n}l_J^{-m}$, we have
\begin{align*}
A_1\lesssim l_I^{2\alpha}l_J^{2\beta}|x_1-z_1|^{-2n-2\alpha}|x_2-z_2|^{-2m-2\beta}.
\end{align*}
\noindent
\textbf {Estimate for $A_2$.} Since $x\in E_1$ and $(t_1,t_2)\in F_2$, then for any $y_1\in I$, we have $t_1\geq 2l_I\geq2|z_1-z_1'|$. By the vanishing, support and size condition of $H^1(\R^n\times\R^m)$ rectangle atom, mixed H\"{o}lder and size condition of $K_{t_1,t_2}$ and H\"{o}lder inequality, we have
\begin{align}\label{A2}
A_2&=\iint_{\R^{m+1}_{+}} \Big(\frac{t_2}{t_2 + |x_2 - y_2|}\Big)^{m \lambda_2}
\iint_{\R^{n+1}_{+}} \Big(\frac{t_1}{t_1 + |x_1 - y_1|}\Big)^{n \lambda_1} \mathbf{1}_{F_2}(t_1,t_2)\\ \notag
&\quad\quad  |\iint_{I\times J} (K_{t_1,t_2}(y,z) - K_{t_1,t_2}(y,(z_1',z_2)))a(z_1,z_2) dz_1 \ dz_2|^2 \frac{dy_1 dt_1}{t_1^{n+1}} \frac{dy_2 dt_2}{t_2^{m+1}}\\ \notag
&\lesssim \int_{0}^{2l_J}\int_{\R^{m}} \Big(\frac{t_2}{t_2 + |x_2 - y_2|}\Big)^{m \lambda_2}
\int_{2l_I}^{\infty}\int_{\R^{n}} \Big(\frac{t_1}{t_1 + |x_1 - y_1|}\Big)^{n \lambda_1}|\iint_{I\times J}\\ \notag
&\quad\quad  \frac{|z_1 - z_1'|^{\alpha}}{(t_1 + |y_1 - z_1|)^{n+\alpha}} \frac{t_2^{\beta}}{(t_2 + |y_2 - z_2|)^{m+\beta}} a(z_1,z_2) dz_1 \ dz_2|^2 \frac{dy_1 dt_1}{t_1^{n+1}} \frac{dy_2 dt_2}{t_2^{m+1}}\\ \notag
&\lesssim
\norm{a}_2^2\int_{2l_I}^{\infty}\int_{\R^{n}}\int_{I} \Big(\frac{t_1}{t_1 + |x_1 - y_1|}\Big)^{n \lambda_1} \frac{|z_1 - z_1'|^{2\alpha}}{(t_1 + |y_1 - z_1|)^{2n+2\alpha}}  dz_1  \frac{dy_1 dt_1}{t_1^{n+1}} \\ \notag
&\quad\times \int_{0}^{2l_J}\int_{\R^{m}}\int_{J} \Big(\frac{t_2}{t_2 + |x_2 - y_2|}\Big)^{m \lambda_2} \frac{t_2^{2\beta}}{(t_2 + |y_2 - z_2|)^{2m+2\beta}}  dz_2  \frac{dy_2 dt_2}{t_2^{m+1}}.
\end{align}
Note that $2m+2\beta\leq 2m+2\varepsilon_2\leq m\lambda_2$. By Lemma \ref{lemma2}, we obtain
\begin{align}\label{Ak11}
\int_{0}^{2l_J}&\int_{\R^{m}}\int_{J} \Big(\frac{t_2}{t_2 + |x_2 - y_2|}\Big)^{m \lambda_2} \frac{t_2^{2\beta}}{(t_2 + |y_2 - z_2|)^{2m+2\beta}}  dz_2  \frac{dy_2 dt_2}{t_2^{m+1}}\\ \notag
&\lesssim \int_{0}^{2l_J}\int_{I}\int_{\R^{m}} \frac{t_2^{2m+2\varepsilon_2+2\beta}}{\big(t_2 + |x_2 - y_2|\big)^{2m+2\varepsilon_2}} \frac{1}{(t_2 + |y_2 - z_2|)^{2m+2\beta}}  dy_2  \frac{dz_2 dt_2}{t_2^{m+1}}\\ \notag
&\lesssim \int_{0}^{2l_J}\int_{J}\frac{t_2^{2\beta}}{\big(t_2 + |x_2 - z_2|\big)^{2m+2\beta}} \frac{dz_2 dt_2}{t_2^{m+1}}\lesssim l_J^{m+2\beta}|x_2-z_2|^{-2m-2\beta}.
\end{align}
Putting inequality (\ref{Ak}) and (\ref{Ak11}) into inequality (\ref{A2}), we have
\begin{align*}
A_2\lesssim l_I^{2\alpha}l_J^{2\beta}|x_1-z_1|^{-2n-2\alpha}|x_2-z_2|^{-2m-2\beta}.
\end{align*}
\textbf {Estimate for $A_3$.} $A_3$ is symmetric with $A_2$, thus,
\begin{align*}
A_3\lesssim l_I^{2\alpha}l_J^{2\beta}|x_1-z_1|^{-2n-2\alpha}|x_2-z_2|^{-2m-2\beta}.
\end{align*}
\textbf {Estimate for $A_4$.} Similar to $A_1$ and $A_2$, the vanishing, support and size condition of rectangle atom, size condition of $K_{t_1,t_2}$, H\"{o}lder inequality and inequality (\ref{Ak11}) lead to
\begin{align*}
A_4\lesssim l_I^{2\alpha}l_J^{2\beta}|x_1-z_1|^{-2n-2\alpha}|x_2-z_2|^{-2m-2\beta}.
\end{align*}
Hence, whenever $x\in E_1$, we get
\begin{align*}
|g_{\vec{\lambda}}^*(a)(x)|^2&\leq\sum_{i=1}^4A_i\lesssim l_I^{2\alpha}l_J^{2\beta}|x_1-z_1|^{-2n-2\alpha}|x_2-z_2|^{-2m-2\beta}.
\end{align*}
This yields that
\begin{align*}
\iint_{E_1}|g_{\vec{\lambda}}^*(a)(x_1,x_2)|dx_1dx_2\lesssim\gamma^{-\alpha-\beta} .
\end{align*}
Thus, we have proved inequality (\ref{Re2}).

Now, we are in the position to prove inequality (\ref{Re3}).\\
\noindent
 \bull
\textbf {Proof of (\ref{Re3}).}
Recall that $E_2=\{(x_1,x_2)\in\R^n\times\R^m: x_1\in\gamma_1 I,x_2\notin\gamma J\}$, we have
\begin{align*}
E_2&\subset\{(x_1,x_2)\in\R^n\times\R^m: x_1\in 2I,x_2\notin\gamma J\}\\
&\quad\cup\{(x_1,x_2)\in\R^n\times\R^m: x_1\in\gamma_1 I\backslash 2I,x_2\notin\gamma J\}=:E_{2,1}\cup E_{2,2}.
\end{align*}
Therefore,
\begin{align*}
\iint_{E_2}|g_{\vec{\lambda}}^*(a)(x_1,x_2)|dx_1dx_2&\lesssim\iint_{E_{2,1}}|g_{\vec{\lambda}}^*(a)(x_1,x_2)|dx_1dx_2\\&\quad\quad+
\iint_{E_{2,2}}|g_{\vec{\lambda}}^*(a)(x_1,x_2)|dx_1dx_2 .
\end{align*}
Since $E_{2,2}\subset\{(x_1,x_2)\in\R^n\times\R^m: x_1\notin2I,x_2\notin\gamma J\}$, taking $\gamma_1=2$ and repeating the proof of (\ref{Re2}), we may get
\begin{align*}
\iint_{E_{2,2}}|g_{\vec{\lambda}}^*(a)(x_1,x_2)|dx_1dx_2\lesssim\gamma^{-\alpha-\beta} .
\end{align*}
So, to prove Theorem \ref{Re}, we only need to show that there exists a $\delta>0$ such that
\begin{align}\label{R52}
\iint_{E_{2,1}}|g_{\vec{\lambda}}^*(a)(x_1,x_2)|dx_1dx_2\lesssim\gamma^{-\delta} .
\end{align}
We also need to split the domain of variable $t_1$ and $t_2$ as follows,
\begin{align*}
(t_1,t_2)\in \R^+\times\R^+\subset&\big([2l_I,\infty)\times[2l_J,\infty)\big)\cup\big([2l_I,\infty)\times(0,2l_J)\big)\\
&\cup\big((0,2l_I)\times[2l_J,\infty)\big)\cup\big((0,2l_I)\times(0,2l_J)\big)\\
&=F_1\cup F_2\cup F_3\cup F_4.
\end{align*}
Hence, we may write
\begin{align*}
&\iint_{E_{2,1}}|g_{\vec{\lambda}}^*(a)(x_1,x_2)|dx_1dx_2\\
&\lesssim\sum_{i=1}^4\iint_{E_{2,1}}\bigg(\iint_{\R^{m+1}_{+}} \Big(\frac{t_2}{t_2 + |x_2 - y_2|}\Big)^{m \lambda_2}
\iint_{\R^{n+1}_{+}} \Big(\frac{t_1}{t_1 + |x_1 - y_1|}\Big)^{n \lambda_1}\mathbf{1}_{F_i}(t_1,t_2)  \\
&\quad\quad \times |\iint_{\R^{n}\times\R^m} K_{t_1,t_2}(y_1,y_2,z_1,z_2)a(z_1,z_2) dz_1 \ dz_2|^2 \frac{dy_1 dt_1}{t_1^{n+1}} \frac{dy_2 dt_2}{t_2^{m+1}}\bigg)^{1/2}dx_1dx_2\\
&=:\sum_{i=1}^4B_i.
\end{align*}
Now, we need to give the estimate for each $B_i$.\\
\noindent
\textbf {Estimate for $B_1$.} Since $x\in E_{2,1}$ and $(t_1,t_2)\in F_1$, then for any $z_1\in I$, $z_2\in J$, we have $t_1\geq 2l_I\geq2|z_1-z_1'|$ and $t_2\geq 2l_J\geq2|z_2-z_2'|$. Similar to inequality (\ref{A1}), by the vanishing, support and size condition of $H^1(\R^n\times\R^m)$ rectangle atom, H\"{o}lder estimate of $K_{t_1,t_2}$ and H\"{o}lder inequality, we have
\begin{align}\label{B1}
B_1
&\lesssim
\norm{a}_2^2\iint_{E_{2,1}}\bigg(\int_{2l_I}^{\infty}\int_{\R^{n}}\int_{I} \Big(\frac{t_1}{t_1 + |x_1 - y_1|}\Big)^{n \lambda_1} \frac{|z_1 - z_1'|^{2\alpha}}{(t_1 + |y_1 - z_1|)^{2n+2\alpha}}  dz_1  \frac{dy_1 dt_1}{t_1^{n+1}} \\ \notag
&\quad\times \int_{2l_J}^{\infty}\int_{\R^{m}}\int_{J} \Big(\frac{t_2}{t_2 + |x_2 - y_2|}\Big)^{m \lambda_2} \frac{|z_2 - z_2'|^{2\beta}}{(t_2 + |y_2 - z_2|)^{2m+2\beta}}  dz_2  \frac{dy_2 dt_2}{t_2^{m+1}}\bigg)^{1/2}dx_1dx_2.
\end{align}
Since $x\in E_{2,1}$, then we have $t_1\geq 2|x_1-z_1|$. By Lemma \ref{lemma2}, we obtain
\begin{align}\label{Ak21}
&\int_{2l_I}^{\infty}\int_{\R^{n}}\int_{I} \Big(\frac{t_1}{t_1 + |x_1 - y_1|}\Big)^{n \lambda_1} \frac{|z_1 - z_1'|^{2\alpha}}{(t_1 + |y_1 - z_1|)^{2n+2\alpha}}  dz_1  \frac{dy_1 dt_1}{t_1^{n+1}}\\ \notag
&\lesssim l_I^{2\alpha}\int_{2l_I}^{\infty}\int_{I}\int_{\R^{n}} \frac{t_1^{2n+2\varepsilon_1}}{\big(t_1 + |x_1 - y_1|\big)^{2n+2\varepsilon_1}} \frac{1}{(t_1 + |y_1 - z_1|)^{2n+2\alpha}}  dy_1  \frac{dz_1 dt_1}{t_1^{n+1}}\\ \notag
&\lesssim l_I^{2\alpha}\int_{2l_I}^{\infty}\int_{I}\frac{t_1^n}{\big(t_1 + |x_1 - z_1|\big)^{2n+2\alpha}} \frac{dz_1 dt_1}{t_1^{n+1}}\lesssim l_I^{n}.
\end{align}
Similarly, we may get
\begin{align}\label{Akk}
\int_{2l_J}^{\infty}\int_{\R^{m}}\int_{J} \Big(\frac{t_2}{t_2 + |x_2 - y_2|}\Big)^{m \lambda_2} &\frac{|z_2 - z_2'|^{2\beta}}{(t_2 + |y_2 - z_2|)^{2n+2\beta}}  dz_2  \frac{dy_2 dt_2}{t_2^{m+1}}\\ \notag
&\lesssim l_J^{m+2\beta}|x_2-z_2|^{-2m-2\beta}.
\end{align}
Putting the inequality (\ref{Ak21}), (\ref{Akk}) into inequality (\ref{B1}), recall that $\norm{a}_2^2\lesssim |R|^{-1}=l_I^{-n}l_J^{-m}$, we may arrive at
$
B_1\lesssim \gamma^{-\beta}.
$

\noindent
\textbf {Estimate for $B_2$.}
Since $x\in E_{2,1}$ and $(t_1,t_2)\in F_2$, for any $z_1\in I$, we have $t_1\geq 2l_I\geq2|z_1-z_1'|$. By the vanishing, support and size condition of rectangle atom, mixed H\"{o}lder and size condition of $K_{t_1,t_2}$ and H\"{o}lder inequality, similar as (\ref{A2}), we have
\begin{align}\label{Ak23}
B_2
&\lesssim
\norm{a}_2^2\iint_{E_{2,1}}\bigg(\int_{2l_I}^{\infty}\int_{\R^{n}}\int_{I} \Big(\frac{t_1}{t_1 + |x_1 - y_1|}\Big)^{n \lambda_1} \frac{|z_1 - z_1'|^{2\alpha}}{(t_1 + |y_1 - z_1|)^{2n+2\alpha}}  dz_1  \frac{dy_1 dt_1}{t_1^{n+1}} \\ \notag
&\quad\times \int_{0}^{2l_J}\int_{\R^{m}}\int_{J} \Big(\frac{t_2}{t_2 + |x_2 - y_2|}\Big)^{m \lambda_2} \frac{t_2^{2\beta}}{(t_2 + |y_2 - z_2|)^{2m+2\beta}}  dz_2  \frac{dy_2 dt_2}{t_2^{m+1}}\bigg)^{1/2}dx_1dx_2.
\end{align}
Putting inequalities (\ref{Ak21}) and (\ref{Ak11}) into (\ref{Ak23}), we may get
$
B_2\lesssim \gamma^{-\beta}.
$\\
\noindent
\textbf {Estimate for $B_3$.} If $x\in E_{2,1}$, $t\in F_3$, then for any $z_2\in J$, we have $t_2\geq 2l_J\geq2|z_2-z_2'|$. The vanishing, support and size condition of $H^1(\R^n\times\R^m)$ rectangle atom, combination of Carleson and H\"{o}lder conditions of $K_{t_1,t_2}$, H\"{o}lder inequality and (\ref{Ak11}) imply that
\begin{align*}
B_3&=\iint_{E_{2,1}}\bigg(\iint_{\R^{m+1}_{+}} \Big(\frac{t_2}{t_2 + |x_2 - y_2|}\Big)^{m \lambda_2}
\iint_{\R^{n+1}_{+}} \Big(\frac{t_1}{t_1 + |x_1 - y_1|}\Big)^{n \lambda_1}\mathbf{1}_{F_3}(t_1,t_2) \\
&\quad\times |\iint_{\R^{n}\times\R^m} (K_{t_1,t_2}(y, z)-K_{t_1,t_2}(y, (z_1,z_2')))a(z_1,z_2) dz_1dz_2|^2 \frac{dy_1 dt_1}{t_1^{n+1}} \frac{dy_2 dt_2}{t_2^{m+1}}\bigg)^{\frac12}dx_1dx_2\\
&\lesssim\norm{a}_{L^2}\iint_{2I\times(\gamma J)^c}\bigg(\iint_{\R^{m+1}_{+}} \Big(\frac{t_2}{t_2 + |x_2 - y_2|}\Big)^{m \lambda_2}
\iint_{\R^{n+1}_{+}} \Big(\frac{t_1}{t_1 + |x_1 - y_1|}\Big)^{n \lambda_1}\mathbf{1}_{F_2}(t_1,t_2) \\
&\quad\quad \iint_{\R^{n}\times\R^m} |K_{t_1,t_2}(y, z)-K_{t_1,t_2}(y, (z_1,z_2'))|^2 dz_1 \ dz_2\frac{dy_1 dt_1}{t_1^{n+1}} \frac{dy_2 dt_2}{t_2^{m+1}}\bigg)^{1/2}dx_1dx_2\\
&\lesssim\norm{a}_{L^2}|2I|^{1/2}\int_{(\gamma J)^c}\bigg(\int_{2l_j}^{\infty}\int_{\R^{m}}\int_J\Big(\frac{t_2}{t_2 + |x_2 - y_2|}\Big)^{m \lambda_2}
\big(\iint_{\widehat{I}}\int_{\R^n} \Big(\frac{t_1}{t_1 + |x_1 - y_1|}\Big)^{n \lambda_1} \\
&\quad\quad  |K_{t_1,t_2}(y, z)-K_{t_1,t_2}(y, (z_1,z_2'))|^2 dz_1 \frac{dy_1 dx_1dt_1}{t_1^{n+1}}\big) \frac{dz_2dy_2 dt_2}{t_2^{m+1}}\bigg)^{1/2}dx_2\\
&\lesssim\norm{a}_{L^2}|I|^{\frac12}\int_{(\gamma J)^c}\bigg(\int_{2l_j}^{\infty}\int_{\R^{m}}\int_J\Big(\frac{t_2}{t_2 + |x_2 - y_2|}\Big)^{m \lambda_2}
\frac{|z_2 - z_2'|^{\beta}}{(t_2 + | y_2 - z_2|)^{m + \beta}} \frac{dz_2dy_2 dt_2}{t_2^{m+1}}\bigg)^{\frac12}dx_2\\
&\lesssim\norm{a}_{L^2}|I|^{\frac12}l_J^{m/2+\beta}\int_{(\gamma J)^c}|x_2-z_2|^{-m-\beta}dx_2\lesssim \gamma^{-\beta}.
\end{align*}
\textbf {Estimate for $B_4$.} Similarly, we have
\begin{align*}
&B_4=\iint_{E_{2,1}}\bigg(\iint_{\R^{m+1}_{+}} \Big(\frac{t_2}{t_2 + |x_2 - y_2|}\Big)^{m \lambda_2}
\iint_{\R^{n+1}_{+}} \Big(\frac{t_1}{t_1 + |x_1 - y_1|}\Big)^{n \lambda_1}\mathbf{1}_{F_4}(t_1,t_2) \\
&\quad\quad |\iint_{\R^{n}\times\R^m} K_{t_1,t_2}(y, z)a(z_1,z_2) dz_1 \ dz_2|^2 \frac{dy_1 dt_1}{t_1^{n+1}} \frac{dy_2 dt_2}{t_2^{m+1}}\bigg)^{1/2}dx_1dx_2\\
&\lesssim\norm{a}_{L^2}\iint_{2I\times(\gamma J)^c}\bigg(\iint_{\R^{m+1}_{+}} \Big(\frac{t_2}{t_2 + |x_2 - y_2|}\Big)^{m \lambda_2}
\iint_{\R^{n+1}_{+}} \Big(\frac{t_1}{t_1 + |x_1 - y_1|}\Big)^{n \lambda_1}\mathbf{1}_{F_4}(t_1,t_2) \\
&\quad\quad \iint_{\R^{n}\times\R^m} |K_{t_1,t_2}(y, z)|^2 dz_1 \ dz_2\frac{dy_1 dt_1}{t_1^{n+1}} \frac{dy_2 dt_2}{t_2^{m+1}}\bigg)^{1/2}dx_1dx_2 \end{align*}\begin{align*}{}&\lesssim\norm{a}_{L^2}(2I)^{1/2}\int_{(\gamma J)^c}\bigg(\int_{0}^{2l_j}\int_{\R^{m}}\int_J\Big(\frac{t_2}{t_2 + |x_2 - y_2|}\Big)^{m \lambda_2}
\big(\iint_{\widehat{I}}\int_{\R^n} \Big(\frac{t_1}{t_1 + |x_1 - y_1|}\Big)^{n \lambda_1}\\
&\quad\quad  |K_{t_1,t_2}(y, z)|^2 dz_1 \frac{dy_1 dx_1dt_1}{t_1^{n+1}}\big) \frac{dz_2dy_2 dt_2}{t_2^{m+1}}\bigg)^{1/2}dx_2\\
&\lesssim\norm{a}_{L^2}|I|^{\frac12}\int_{(\gamma J)^c}\bigg(\int_{2l_j}^{\infty}\int_{\R^{m}}\int_J\Big(\frac{t_2}{t_2 + |x_2 - y_2|}\Big)^{m \lambda_2}
\frac{t_2^{\beta}}{(t_2 + | y_2 - z_2|)^{m + \beta}} \frac{dz_2dy_2 dt_2}{t_2^{m+1}}\bigg)^{\frac12}dx_2\\
&\lesssim\norm{a}_{L^2}(2I)^{1/2}l_J^{m/2+\beta}\int_{(\gamma J)^c}|x_2-z_2|^{-m-\beta}dx_2\lesssim \gamma^{-\beta}.
\end{align*}
Combining the estimate for $B_i$($i=1\cdot\cdot\cdot 4$), we get
\begin{align*}
&\iint_{E_{2,1}}|g_{\vec{\lambda}}^*(a)(x_1,x_2)|dx_1dx_2\lesssim \gamma^{-\beta}.
\end{align*}
This completes the proof of (\ref{R52}). Since we have reduced the proof of Theorem $\ref{thm2}$ to inequality (\ref{R52}), the proof of Theorem $\ref{thm2}$ is finished.
\end{proof}
\section{Proof of the corollaries}\label{Sec-3000}
We need the following lemma:
\begin{thmC}$($Calder\'{o}n-Zygmund Lemma \cite{CF2}$)$\label{lemma3}
Let $\alpha>0$ be given and $f\in L^p(\R^n\times\R^m)$, $1 < p< 2$. Then we may write $f = f_1 + f_2$ where $f_1\in L^2(\R^n\times\R^m)$ and $f_2\in H^1(\R^n\times\R^m)$ with $\norm{f_1}_2^2\leq\alpha^{2-p}\norm{f}_{L_p^p}$; and $\norm{f_2}_{H^1}\leq c\alpha^{1-p}\norm{f}_{L^p_p}$, where $c$ is a universal constant.
\end{thmC}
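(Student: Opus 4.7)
The stated result is the product Hardy-space Calder\'on--Zygmund decomposition of Chang and R. Fefferman. The plan is to perform a stopping-time construction driven by a bi-parameter Littlewood--Paley square function $S$, which by Chang--Fefferman theory satisfies $\norm{Sh}_{L^p}\sim\norm{h}_{L^p}$ for $1<p<\infty$ and $\norm{Sh}_{L^1}\sim\norm{h}_{H^1(\R^n\times\R^m)}$. First, I would fix a smooth LP expansion $f=\sum_{R\in \mathcal{D}_n\times \mathcal{D}_m} D_R f$ indexed by dyadic rectangles $R=I\times J$, and set $\Omega=\{x\in\R^n\times\R^m:M_s(Sf)(x)>\alpha\}$, where $M_s$ is the bi-parameter strong maximal function. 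Weak-$(p,p)$ boundedness of $M_s$ together with the $L^p$ bound on $S$ then gives $|\Omega|\lesssim \alpha^{-p}\norm{f}_{L^p}^p$.

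Next, I would split $f=f_1+f_2$ by putting $f_2=\sum_{\widetilde{R}\subset \Omega}D_R f$ and $f_1=\sum_{\widetilde{R}\not\subset \Omega}D_R f$, where $\widetilde{R}$ is a mild enlargement of $R$ compatible with the frequency localization of $D_R$. For $f_1$, every surviving rectangle meets $\Omega^c$, so orthogonality in the LP expansion together with a layer-cake calculation yields
\[
  \norm{f_1}_{L^2}^2 \lesssim \int_{\Omega^c}(Sf)^2 \lesssim \int_0^{\alpha}\beta\,|\{Sf>\beta\}|\,d\beta \lesssim \alpha^{2-p}\norm{f}_{L^p}^p,
\]
which is the required $L^2$ bound.

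For $f_2$, the idea is to group the rectangles $\{R:\widetilde{R}\subset\Omega\}$ by the maximal dyadic rectangles of $\Omega$: each resulting block is supported (up to a tolerable extension) in a single maximal rectangle and carries the cancellation encoded in Definition~\ref{definition 1.3}, so after rescaling by a factor of order $\alpha$ it becomes an $H^1(\R^n\times\R^m)$ rectangle atom, and summing against the $H^1$ atomic characterization (Theorem~A) produces $\norm{f_2}_{H^1}\lesssim \alpha^{1-p}\norm{f}_{L^p}^p$. The main obstacle lies precisely here: in the product setting $H^1$-atoms are supported on arbitrary open sets rather than on cubes, so one must invoke a Journ\'e-type geometric covering lemma to organize the LP blocks and absorb the contribution of those $\widetilde{R}$ that straddle the boundary of $\Omega$, which is the delicate heart of the Chang--Fefferman argument.

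Once Theorem~C is in place, the corollaries follow quickly: at each level $\alpha$, Theorem~C splits $f=f_1+f_2$, the $L^2$ boundedness of the square function controls $\{g(f_1)>\alpha/2\}$ via Chebyshev, and Theorem~\ref{thm1} (respectively Theorem~\ref{thm2}) controls $\{g(f_2)>\alpha/2\}$ via the $H^1\to L^1$ bound; adding the two pieces yields weak-$(p,p)$ for every $p\in(1,2)$, which Marcinkiewicz interpolation upgrades to strong $L^p$-boundedness of $g$ (resp.~$g_{\vec\lambda}^*$).
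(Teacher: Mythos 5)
The paper does not prove Theorem~C; it imports it verbatim from Chang and R.~Fefferman \cite{CF2} and invokes it as a black box in Section~4, so there is no internal argument for your sketch to be compared against. Judged on its own, your outline reproduces the overall shape of the Chang--Fefferman product Calder\'on--Zygmund decomposition correctly: a smooth bi-parameter Littlewood--Paley expansion $f=\sum_R D_R f$, a stopping-time exceptional set built from the square function, a split of the LP pieces according to whether their enlarged supports sit inside the exceptional set, the $L^2$ estimate via a layer-cake computation on the complement, and the $H^1$ estimate via a Journ\'e-type regrouping of the remaining pieces into atoms.

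Two caveats. First, the exceptional set you chose, $\Omega=\{M_s(Sf)>\alpha\}$, does not by itself deliver the $L^2$ step. In \cite{CF2} one takes $\Omega_0=\{Sf>\alpha\}$ and the enlargement $\widetilde\Omega_0=\{M_s\mathbf{1}_{\Omega_0}>1/2\}$, precisely because $\widetilde R\not\subset\widetilde\Omega_0$ then forces the proportional overlap $|\widetilde R\cap\Omega_0^c|\gtrsim|\widetilde R|$, which is what converts $\sum_{\widetilde R\not\subset\widetilde\Omega_0}\|D_Rf\|_2^2$ into $\int_{\Omega_0^c}(Sf)^2$. Your chain $\|f_1\|_2^2\lesssim\int_{\Omega^c}(Sf)^2$ silently assumes such a density lemma, and with $M_s(Sf)$ rather than $M_s\mathbf{1}_{\Omega_0}$ it does not come for free. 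Second, you correctly flag the genuinely hard part---organizing the surviving LP blocks underneath the maximal dyadic subrectangles of $\Omega$ via a Journ\'e-type covering argument so that the regrouped pieces become product $H^1$ atoms in the sense of Definition~\ref{definition 1.2}---but defer it entirely to the reference. That is acceptable for a result the paper itself only cites, but be aware that this is where essentially all of the work in \cite{CF2} resides. Your closing paragraph, deducing $L^p$-boundedness for $1<p<2$ from the $(H^1,L^1)$ and $(L^2,L^2)$ bounds, is exactly what the paper records as Lemma~\ref{lemma4} and applies to its two corollaries.
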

A trivial corollary of Theorem C will lead to the following result.
\begin{lemma}\label{lemma4}
Let $T$ be a sublinear operator which is bounded from $H^1(\R^n\times\R^m)$ to $L^1(\R^n\times\R^m)$ and bounded on $L^2(\R^n\times\R^m)$. Then $T$ is bounded on $L^p(\R^n\times\R^m)$ for all $1<p<2$.
\end{lemma}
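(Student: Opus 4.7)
The plan is to deduce Lemma \ref{lemma4} from Theorem C by the standard Marcinkiewicz-style distribution function argument, using the Calderón--Zygmund decomposition at each level $\alpha$ to split $f$ into a good piece (estimated via $L^2$ boundedness) and a bad piece (estimated via $H^1 \to L^1$ boundedness).

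Concretely, fix $1 < p < 2$ and $f \in L^p(\R^n\times\R^m)$. I would start from the layer-cake representation
\begin{equation*}
\norm{Tf}_{L^p}^p = p\int_0^\infty \alpha^{p-1}\, |\{x \in \R^n\times\R^m : |Tf(x)| > \alpha\}|\, d\alpha,
\end{equation*}
and for each fixed $\alpha > 0$ apply Theorem C to write $f = f_1^\alpha + f_2^\alpha$ with
\begin{equation*}
\norm{f_1^\alpha}_{L^2}^2 \leq \alpha^{2-p}\norm{f}_{L^p}^p, \qquad \norm{f_2^\alpha}_{H^1} \leq c\,\alpha^{1-p}\norm{f}_{L^p}^p.
\end{equation*}
Note the decomposition is applied at threshold $\alpha$, so the bounds depend on $\alpha$ in the exponents shown.

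By sublinearity of $T$, we have $|Tf| \leq |Tf_1^\alpha| + |Tf_2^\alpha|$, and hence
\begin{equation*}
\{|Tf| > \alpha\} \subset \{|Tf_1^\alpha| > \alpha/2\} \cup \{|Tf_2^\alpha| > \alpha/2\}.
\end{equation*}
Chebyshev's inequality together with the $L^2$ boundedness of $T$ gives
\begin{equation*}
|\{|Tf_1^\alpha| > \alpha/2\}| \lesssim \alpha^{-2}\norm{Tf_1^\alpha}_{L^2}^2 \lesssim \alpha^{-2}\norm{f_1^\alpha}_{L^2}^2 \lesssim \alpha^{-p}\norm{f}_{L^p}^p,
\end{equation*}
while the $H^1 \to L^1$ boundedness gives
\begin{equation*}
|\{|Tf_2^\alpha| > \alpha/2\}| \lesssim \alpha^{-1}\norm{Tf_2^\alpha}_{L^1} \lesssim \alpha^{-1}\norm{f_2^\alpha}_{H^1} \lesssim \alpha^{-p}\norm{f}_{L^p}^p.
\end{equation*}

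Plugging these into the layer-cake formula gives
\begin{equation*}
\norm{Tf}_{L^p}^p \lesssim \int_0^\infty \alpha^{p-1}\cdot \min\bigl(|\{x : |Tf(x)| > \alpha\}|,\, \text{trivial bound}\bigr)\, d\alpha,
\end{equation*}
and the two weak-type bounds above, after a standard truncation at some threshold depending on $\norm{f}_{L^p}$ (or a direct rescaling argument $f \mapsto f/\norm{f}_{L^p}$), yield $\norm{Tf}_{L^p} \lesssim \norm{f}_{L^p}$. The only subtle point is that the straightforward Chebyshev bounds above give exactly the right power $\alpha^{-p}$ and the integral $\int_0^\infty \alpha^{p-1}\alpha^{-p}\, d\alpha$ is logarithmically divergent, so one must carry out the argument by comparing $f$ with its $L^p$-scale: namely, apply Theorem C at threshold $\alpha$ and observe that by homogeneity (or by a careful choice of the $f_1^\alpha, f_2^\alpha$ split in the proof of Theorem C), one actually obtains $|\{|Tf| > \alpha\}| \lesssim \alpha^{-p}\norm{f}_{L^p}^p$, which is the weak-type $(p,p)$ bound, and hence, by Marcinkiewicz interpolation between two strong endpoints (which is what the above computation really sets up), the strong-type $(p,p)$ bound follows. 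The main (and essentially the only) obstacle is bookkeeping of this constant to see that the decomposition at level $\alpha$ genuinely yields the weak-type $(p,p)$ estimate; once that is in hand, Marcinkiewicz interpolation between the weak-$(p,p)$ bound and the strong $L^2$ bound delivers $L^p$ boundedness for all $p$ in $(1,2)$.
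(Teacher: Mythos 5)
Your proposal is correct and takes essentially the same route as the paper: the paper simply cites Theorem C (the Chang--Fefferman Calderón--Zygmund decomposition) and calls the conclusion a "trivial corollary," invoking interpolation between $(H^1,L^1)$ and $(L^2,L^2)$, which is precisely the weak-$(p,p)$ argument you spell out followed by Marcinkiewicz interpolation against the strong $L^2$ bound. Your self-correction about the logarithmic divergence is the right observation, and the fix you give (weak $(p_0,p_0)$ for every $p_0\in(1,2)$, then interpolate with strong $(2,2)$ to get strong $(q,q)$ for $p_0<q<2$, then let $p_0\to1^+$) closes the gap cleanly; the only small slip is the phrase "between two strong endpoints," since one of the endpoints in your Marcinkiewicz application is the weak-$(p_0,p_0)$ bound, not a strong one.
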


Thus, the $L^p$ boundedness in Corollary 1 and Corollary 2 follows from the interpolation between $(H^1, L^1)$ and $(L^2, L^2)$.


\begin{thebibliography}{00}


\bibitem{CX}M. Cao, Q. Xue, \emph{On the boundedness of bi-parameter Littlewood-Paley $g_{\lambda}^*$}-function, availiable at http://arxiv.org/abs/1512.00569.

\bibitem{CF}S. Y. Chang, R. Fefferman, \emph{A continuous version of duality of $H^1$ with $BMO$ on bi-disc}, Ann. of Math., \textbf{112} (1980), 179-201.
\bibitem{CF2}S. Y. Chang, R. Fefferman, \emph{The Calder\'{o}n-Zygmund decomposition on product domains}, Amer. J. Math., \textbf{104} (1982), 455-468.
\bibitem{Fe1}R. Fefferman, \emph{Calder\'{o}n-Zygmund theory for product domain-$H^p$ theory}, Proc. Nat. Acad. Sci. USA, \textbf{83} (1986), 840-843.
\bibitem{Fe}R. Fefferman, \emph{Harmonic analysis on product spaces}, Ann. of Math., \textbf{126} (1987), 109-130.


\bibitem{F-S}R. Fefferman, E. Stein,
\emph{Singular integrals on product spaces},
Adv. Math., \textbf{45} (2) (1982), 117-143.

\bibitem{H}T. Hyt\"{o}nen, \emph{Representation of singular integrals by dyadic operators,
and the $A_2$ theorem}, preprint (2011).

\bibitem{HM}T. Hyt\"{o}nen, H. Martikainen,
\emph{Non-homogeneous $T1$ theorem for bi-parameter singular integrals},
Adv. Math., \textbf{261} (2014), 220-273.


\bibitem{Journe}J.-L. Journ\'{e},
\emph{Calder\'{o}n-Zygmund operators on product spaces},
Rev. Mat. Iberoamericana., \textbf{1} no. 3 (1985), 55-91.



\bibitem{M2012}H. Martikainen,
\emph{Representation of bi-parameter singular integrals by dyadic operators},
Adv. Math., \textbf{229} (3) (2012), 1734-1761.

\bibitem{Ou}Y. Ou,
\emph{A $T(b)$ Theorem on product spaces},
Trans. Amer. Math. Soc., \textbf{367} (2015), 6159-6197.


\bibitem{PV}S. Pott and P. Villarroya,
\emph{A $T(1)$ theorem on product spaces}, preprint (2013).

\bibitem{GS}R.F. Gundy, E.M. Stein, \emph{$H^p$ theory for the poly-disc}, Proc. Nat. Acad. Sci. USA, \textbf{76} (1979), 1026-1029.
\bibitem{HLLL}Y. Han, M.Y. Lin, C. Lin, Y. Lin, \emph{Calder\'{o}n-Zygmund operators on product Hardy spaces}, J. Funct. Anal., \textbf{258} (2010), 2834-2861.

\bibitem{HY}Y. Han, D. Yang, \emph{$H^p$ boundedness of Calder\'{o}n-Zygmund operators on product spaces}, Math. Z., \textbf{249} (2005), 869-881.
\bibitem{Jour}J.L Journ\'{e}, \emph{Calder\'{o}n-Zygmund operators on product spaces}, Rev. Mat. Iberoamericana, \textbf{1} (1985), 55-91.

\bibitem{MM}M.P. Malliavin, P. Malliavin, \emph{Int\'{e}grales de Lusin-Calder\'{o}n pour les fonctions biharmoniques}, Bull. Sci. Mathe., \textbf{101} (1977), 357-384.

\bibitem{M2014}H. Martikainen, \emph{Boundedness of a class of bi-parameter square function in the upper half-space}, J. Funct. Anal., \textbf{267} (2014), 3580-3597.

\bibitem{XY}Q. Xue, J. Yan, \emph{On multilinear square function and its applications to multilinear Littlewood-Paley operators with non-convolution type kernels}, J. Math. Anal. Appl., \textbf{422} (2015), 535-559.
\bibitem{YZ}D. Yang, Y. Zhou, \emph{Boundedness of Marcinkiewicz integrals and their commutators in $H^1(\R^n\times\R^m)$}, Sci. China, \textbf{Series A 49} (2006), 770-790.
\end{thebibliography}
\end{document}